\title{Mixture of experts models for multilevel data: modelling framework and approximation theory}
\author{ 
	Tsz Chai Fung\thanks{Department of Risk Management and Insurance, Georgia State University. 35 Broad Street NW, Atlanta, GA 30303, United States. Email address: \texttt{tfung@gsu.edu}.} \\
	\And
    Spark C.~Tseung\thanks{Department of Statistical Sciences, University of Toronto. Ontario Power Building, 700 University Avenue, 9th Floor, Toronto, ON M5G 1Z5, Canada. Email addresses: \texttt{spark.tseung@mail.utoronto.ca}. 
    }\\ 
}
\date{ }
\newtheorem{assumption}{Assumption}
\newtheorem{theorem}{Theorem}
\newtheorem{lemma}{Lemma}
\newtheorem{remark}{Remark}
\newtheorem{definition}{Definition}
\begin{document}

\maketitle

\begin{abstract}
Multilevel data are prevalent in many real-world applications. However, it remains an open research problem to identify and justify a class of models that flexibly capture a wide range of multilevel data. Motivated by the versatility of the mixture of experts (MoE) models in fitting regression data, in this article we extend upon the MoE and study a class of mixed MoE (MMoE) models for multilevel data. Under some regularity conditions, we prove that the MMoE is dense in the space of any continuous mixed effects models in the sense of weak convergence. As a result, the MMoE has a potential to accurately resemble almost all characteristics inherited in multilevel data, including the marginal distributions, dependence structures, regression links, random intercepts and random slopes. In a particular case where the multilevel data is hierarchical, we further show that a nested version of the MMoE universally approximates a broad range of dependence structures of the random effects among different factor levels.
\end{abstract}


\keywords{Artificial neural network \and Crossed and nested random effects \and Denseness \and Mixed effects models \and Universal approximation theorem}

\section{Introduction} \label{sec:intro}

Mixture of experts (MoE) model, which is first introduced by \cite{jacobs1991adaptive} (see also, e.g., \cite{jordan1994hierarchical} and \cite{mclachlan2000finite} for details), is a probabilistic version of neural network architecture useful for flexible regression, classification and distribution modelling, with applications to various areas including healthcare, business, social and environmental science. Readers may refer to \cite{yuksel2012twenty}, \cite{masoudnia2014mixture} and \cite{nguyen2018practical} for the literature reviews on both the theories and applications of the MoE.

The model structure of the MoE is as follows. Suppose that we have $N$ observations $(\bm{y},\bm{x})=\{(\bm{y}_i,\bm{x}_i)\}_{i=1,\ldots,N}$, where $\bm{y}_i=(y_{i1},\ldots,y_{iK})$ is a $K$-dimensional response variable with output space $\mathcal{Y}\subseteq\mathbb{R}^K$ and $\bm{x}_i=(x_{i1}\ldots,x_{iP})$ are the $P$ covariates or features with input space $\mathcal{X}\subseteq\mathbb{R}^P$. Under the MoE framework, the conditional distribution function of $\bm{y}_i$ given $\bm{x}_i$ is

\begin{equation} \label{eq:moe}
F(\bm{y}_i;\bm{\alpha},\bm{\psi},g|\bm{x}_i)=\sum_{j=1}^{g}\pi_j(\bm{x}_i;\bm{\alpha})F_0(\bm{y}_i;\bm{\psi}_j|\bm{x}_i),
\end{equation}
where $g$ is the number of latent classes. Here, $\pi_j(\bm{x}_i;\bm{\alpha})>0$ is called the gating function with $\sum_{j=1}^{g}\pi_j(\bm{x}_i;\bm{\alpha})=1$ and parameters $\bm{\alpha}$. While the most common choice of gating function is the logit-linear or softmax gating (\cite{jacobs1991adaptive}) given by $\pi_j(\bm{x}_i;\bm{\alpha})=\exp\{\alpha_{j,0}+\bm{\alpha}_j^T\bm{x}_i\}/\sum_{j'=1}^{g}\exp\{\alpha_{j',0}+\bm{\alpha}_{j'}^T\bm{x}_i\}$ with $\bm{\alpha}=\{\alpha_{j0},\bm{\alpha}_{j}:j=1,\ldots,g\}$, alternative gating functions such as Gaussian gating (\cite{xu1995alternative}), student-t gating (\cite{ingrassia2012local}) and probit gating (\cite{geweke2007smoothly}) are also explored in literature. Also, $F_0(\bm{y}_i;\bm{\psi}_j|\bm{x}_i)$ is a probability distribution called the expert function with parameters $\bm{\psi}:=\{\bm{\psi}_j:j=1,\ldots,g\}$. While a common choice for the expert function is a Gaussian distribution (\cite{jordan1992hierarchies}), there has been substantial developments on alternative choices of expert functions to cater for various distributional characteristics such as heavy-tailedness (Laplace by \cite{nguyen2016laplace}, t-distribution by \cite{chamroukhi2016robust}, skewed t by \cite{chamroukhi2017skew} and transformed gamma by \cite{fung2020moe3}) and discrete distributions (Poisson by \cite{grun2008flexmix} and Erlang Count by \cite{fung2019moe2}).

Model flexibility is a crucial desirable property for the class of MoE, and there are extensive research work on the approximation theory for the MoE. \cite{zeevi1998error} shows that the mean function of a univariate ($K=1$) logit-gated MoE can approximate any Sobolev class functions. This result is extended by \cite{jiang1999approximation}, who considers the transformed Sobolev class. Without considering the convergence rate, \cite{nguyen2016universal} proves that the MoE mean function is dense in the class of any continuous functions without the restriction of the Sobolev class, and \cite{nguyen2019approximation} shows similar denseness results using a multivariate ($K>1$) Gaussian-gated MoE.

Apart from studying the mean functions, some research works focus on conditional density approximation with respect to the Hellinger distance, Kullback–Leibler (KL) divergence, or Lebesgue space. \cite{jiang1999hierarchical} and \cite{mendes2012convergence} generalize the results of \cite{jiang1999approximation} by demonstrating the approximation capability of the MoE to any exponential family non-linear regression models. \cite{norets2010approximation} shows that the logit-gated MoE with Gaussian expert function can approximate any conditional densities. Similar results are proved by \cite{norets2014posterior} and \cite{nguyen2019approximation}, who consider the Gaussian gating functions. Recently, \cite{nguyen2021approximations} proves that the class of MoE is dense in the Lebesgue space.

Another stream of distribution approximation theorems studies the denseness in the sense of Prohorov metric of weak convergence. Extending upon \cite{tijms1994stochastic} and \cite{breuer2005introduction} who explore denseness of finite mixture and phase-type distributions in the space of any probability distributions, \cite{fung2019moe1} formulates the concept of ``denseness" in regression settings and shows that the class of MoE is dense in the space of any regression distributions, subject to some regularity conditions such as Lipschitz continuity and distribution tightness. In contrast to other existing works on distribution approximations, the results of \cite{fung2019moe1} are very general as: (i) they hold under a wide range of choices of expert functions (not restricted to Gaussian or other symmetric expert functions); (ii) the target distribution is not restricted to a special class (e.g., exponential family regression models).

Despite of its model flexibility, the aforementioned modelling framework implicitly assumes that the input-output pairs are independent among observations. This is not true for multilevel data (\cite{goldstein2011multilevel}). Apart from the inputs $\bm{x}_i$, there are also $L$ levels of factors $\bm{\theta}_i=(\bm{\theta}_{i1},\ldots,\bm{\theta}_{iL})$ jointly affecting the output $\bm{y}_i$. While these factors are unobserved, they are clustered into various units for each level $l=1,\ldots,L$ and we know how they are clustered. As shown in Figure \ref{fig:multilevel}, for each level $l=1,\ldots,L$, each observation $i$ is assigned into one of the $S_l$ units, where $c_l(\cdot):\{1,\ldots,N\}\mapsto\{1,\ldots,S_l\}$ is denoted as a known function mapping an observation index to one of the $S_l$ units, with $\bm{c}(i)=(c_1(i)\ldots,c_L(i))$. We have $\bm{\theta}_{il}=\bm{\theta}_{i'l}:=\bm{\theta}_l^{(s)}$ if $c_l(i)=c_l(i')=s$. As the factor $\bm{\theta}_l^{(s)}$ affects multiple observations at the same time, there are interdependencies among observations. Ignoring such a dependency would lead to spurious, misleading or biased clustering and prediction outcomes (\cite{goldstein2011multilevel}).

\afterpage{
\begin{figure}[!h]
\begin{center}
\includegraphics[width=\linewidth]{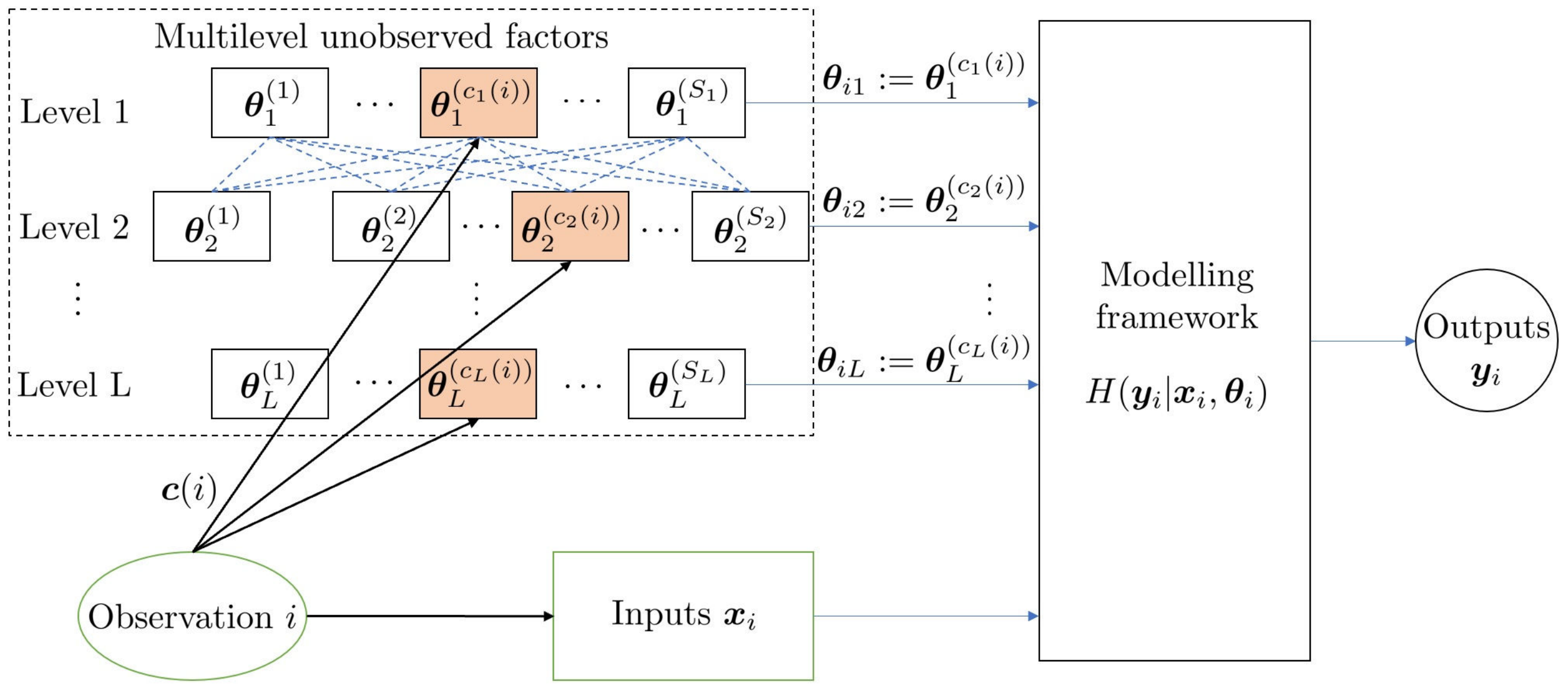}
\end{center}
\caption{Multilevel data structure and modelling framework.}
\label{fig:multilevel}
\end{figure}
}

Multilevel data are prevalent across many applications. The most classic one is the school problem (\cite{aitkin1986statistical}, \cite{goldstein1986multilevel} and \cite{frees2006multilevel}), where ``school" and ``classroom within a school" act as two levels of factors affecting the performance of a student (as an observation). Multilevel data structure can also be caused by repeated measurements collected in longitudinal studies. This is common across various areas including health (e.g., \cite{molenberghs2010family}) and business (e.g., \cite{boucher2006fixed}). For instance, the medical outcome of a patient or the amount of insurance claims by a policyholder are measured or collected repeatedly over time. A remarkable special case of multilevel data is the hierarchical (or nested) data, where the $L$ factor levels are ranked from high to low, and every lower level factor belongs to a particular upper level factor. The school problem is a clear example of hierarchical data.

A popular model to account for the interdependencies among observations is the generalized linear mixed effects model (GLMM) (\cite{goldstein1986multilevel} and \cite{mcgilchrist1994estimation}), which assumes that the output $\bm{y}_i$ depends on the sum of fixed effects (i.e., the impact of the inputs $\bm{x}_i)$ and random effects (i.e., the impact of the factors $\bm{\theta}_i$). To improve model flexibility or achieve specific clustering purposes, the GLMM framework is extended to a non-linear setting (\cite{davidian1993nonlinear} and \cite{gregoire1996non}), formulated in a neural network structure (\cite{bakker2003task}) or integrated to a finite-mixture modelling framework (\cite{ng2004modelling} and \cite{ng2006mixture}). Despite of the desirable properties of the MoE models leading to extensive applications, the research works of mixed effects models in the context of MoE framework are relatively scarce. \cite{yau2003finite} first proposes a two-component logit-gated Gaussian-expert MoE with random effects incorporated in both gating and expert functions. \cite{ng2007extension} then formulates a general $g$-component mixed effect MoE with the use of logistic expert functions for binary classifications. \cite{ng2014mixture} considers a similar framework with random effects only incorporated to the expert functions. Nonetheless, all the aforementioned mixed effect MoE models only deal with a single level of random effect (i.e., $L=1$).

Motivated by the increasing popularity of the MoE models, the prevalence of multilevel data and the desirability of formally justifying model flexibility through approximation theories, the contributions of this paper are fourfold: Firstly, we propose the mixed MoE (MMoE) for multilevel regression data that allows for multiple levels of random effects. Compared to the existing literature (\cite{yau2003finite}, \cite{ng2007extension} and \cite{ng2014mixture}), our proposed model is reduced in two ways: (i) random effects are incorporated only into the gating functions and assumed to be independent, and the model includes the random effects differently; (ii) regression links are removed from the expert functions. Secondly, we formulate the definition of denseness for the class of mixed effects models in the sense of weak convergence, which directly extends upon \cite{fung2019moe1} who formulate denseness for regression distributions. Thirdly, we prove that the class of MMoE is dense in the space of any continuous mixed effects models subject to some mild regularity conditions. This not only demonstrates the flexibility of the proposed model in capturing various aspects of multilevel data characteristics such as joint distributions, regression patterns, random intercepts, and random slopes, but also suggests that our proposed model is parsimonious with a reduced structure. Compared to \cite{fung2019moe1}, several assumptions for the denseness theorem are also relaxed in this paper. For example, Lipschitz continuity and distribution tightness are no longer explicitly required. Hence, the proof techniques of this paper are quite different from those in \cite{fung2019moe1}. Fourthly, in a particular case of hierarchical data, we further prove that a nested version of the MMoE can accurately approximate a wide range of dependence structures between the upper and lower level factors, even if the MMoE considered is a simplified model class consisting of only independent random effects across levels. This result is critical for many applications, e.g., the classroom impact on a student's performance is likely dependent on the school the student attends.

The focus of this paper is to formulate the MMoE model for multilevel data and theoretically justify its versatility. In a subsequent paper (\cite{Tseung2022MixedLRMoEApplication}), we will address the estimation and application problems under the proposed MMoE. A stochastic variational ECM algorithm is proposed to efficiently estimate the model parameters. Also, the MMoE is applied to an automobile insurance dataset, demonstrating its ability to reasonably predict policyholders' future claims based on their past claim histories.

This paper is structured as follows. Section \ref{sec:gen} defines a generalized class of mixed effect models for multilevel data, which includes nearly all mixed effect models in the literature. In Section \ref{sec:moe}, we introduce the MMoE as a candidate class of mixed effect models to flexibly capture multilevel data. Interpretation and visualization of the proposed model are also provided. Section \ref{sec:dense} defines ``denseness" in the context of mixed effect models and proves that the MMoE is a universal approximator of most mixed effect models subject to some mild conditions. In Section \ref{sec:nested}, we discuss the model formulation and denseness property in a special case where the dataset is hierarchical with nested random effects. The findings are summarized in Section \ref{sec:discussions}, accompanying some limitations of the denseness theory in justifying the approximation capability of the proposed MMoE.

\section{Mixed effect models for multilevel data} \label{sec:gen}

Datasets with multilevel structure are often modelled by a mixed effects model. Under this modelling framework, the effects of the known inputs $\bm{x}_i$ on the output $\bm{y}_i$ are perceived as the ``fixed effect" or ``hard sharing of parameters", while the $L$ levels of unobserved factors $\bm{\theta}_i$ are treated as random (specified by a distribution) and their impacts on $\bm{y}_i$ are regarded as ``random effect" or ``soft sharing of parameters". In this section, we will discuss some technical details regarding a generalized framework of the mixed effects models.

Let $(\bm{\Omega},\mathcal{F},\mathbb{P})$ be the probability space and suppose that $\bm{\theta}_l^{(s)}$ is a $\mathcal{F}$-measurable map from $(\bm{\Omega},\mathcal{F})$ to $(\bm{\Theta}_l,\mathcal{Q}_l)$ for every $l=1,\ldots,L$ and $s=1,\ldots,S_l$, where $\bm{\Theta}_l$ is the space of $\bm{\theta}_l^{(s)}$ or $\bm{\theta}_{il}$. $\bm{\theta}_l^{(s)}$ is a random variable if $(\bm{\Theta}_l,\mathcal{Q}_l)=(\mathbb{R},\mathcal{R})$ where $\mathcal{R}$ is a Borel set, but we do not want to impose such a restriction. It is because the factors $\bm{\theta}_l^{(s)}$ are unobserved and we are unsure if these factors can be quantified as a real number. Using clinical studies as an example, it is impossible to summarize the unobserved characteristics of hospitals or doctors into just a single number as their impacts to a patient's medical outcome are complicated, possibly related to many unknown hidden factors including hospital's medical equipment and financial support, and doctor's education and expertise. The space $\bm{\Theta}_l$ also varies among different mixed effects models in literature. For example, $\bm{\Theta}_l=\mathbb{R}$ for most GLMMs (\cite{mcgilchrist1994estimation}), $\bm{\Theta}_l=\mathbb{R}^{2g-1}$ for the GLMM MoE model by \cite{ng2007extension}, and $\bm{\Theta}_l=\mathbb{R}^{gK}$ for the mixture of random effects models by \cite{ng2014mixture}.

Under the generalized mixed effects model, we assume that $\bm{y}_i$ is influenced directly only by $\bm{x}_i$ and $\bm{\theta}_i$. Conditioned on $\bm{x}_i$ and $\bm{\theta}_i$, it is further assumed that $\{\bm{y}_i\}_{i=1,\ldots,N}$ are mutually independent. In particular, we have
\begin{equation} \label{eq:gen_y_conditional}
    \bm{y}_i|\bm{x}_i,\bm{\theta}_i\overset{\text{ind}}{\sim} H(\cdot|\bm{x}_i,\bm{\theta}_i),\qquad i=1,\ldots,N,
\end{equation}
where $H$ can be any probability distributions. Figure \ref{fig:multilevel} shows a visualization of the modelling framework. With these assumptions, the joint distribution of $\bm{y}$ given $\bm{x}$ is given by
\begin{equation} \label{eq:gen_joint_cross}
\tilde{H}(\bm{y}|\bm{x})=\int_{\Omega}\left[\prod_{i=1}^{N}H(\bm{y}_i|\bm{x}_i,\bm{\theta}_i)\right]d\mathbb{P}
\end{equation}

Suppose that each measurable space $(\bm{\Theta}_l,\mathcal{Q}_l)$ is also equipped by a probability measure $G_l$, corresponding to the ``distribution" of $\bm{\theta}_l^{(s)}$. Denote further $(\tilde{\bm{\Theta}},\tilde{\mathcal{Q}},G)$ as the product of probability spaces $\{(\bm{\Theta}_l,\mathcal{Q}_l,G_l)\}_{l=1,\ldots,L;s=1,\ldots,S_l}$. Then, the joint distribution can be re-written as
\begin{equation} \label{eq:gen_joint_cross2}
\tilde{H}(\bm{y}|\bm{x})=\int_{\tilde{\bm{\Theta}}}\left[\prod_{i=1}^{N}H(\bm{y}_i|\bm{x}_i,\bm{\theta}_i)\right]dG(\tilde{\bm{\theta}}),
\end{equation}
where $\tilde{\bm{\theta}}=\{\bm{\theta}_l^{(s)}\}_{l=1,\ldots,L;s=1,\ldots,S_l}$. Note that the above model framework includes a very wide range of models for multilevel or hierarchical data, including generalized linear mixed models (GLMM) and non-linear mixed effects models (see, e.g., \cite{goldstein1986multilevel} and \cite{davidian1993nonlinear}). As there are no restrictions on the functional forms of $H$ and $G$, the above model structure is indeed very general, automatically containing any possible joint distributions of $\bm{y}_i|\bm{x}_i,\bm{\theta}_i$, regression links between $\bm{x}_i$ and $\bm{y}_i$ (including non-linear effect and interactions among covariates), effects of unobserved factors $\bm{\theta}_i$ alone on $\bm{y}_i$ (called random intercept), and interactions between $\bm{\theta}_i$ and $\bm{x}_i$ (called random slope).

Since we have defined $(\tilde{\bm{\Theta}},\tilde{\mathcal{Q}},G)$ as a product probability space, the following assumption has been implicitly made on $\bm{\theta}_l^{(s)}$:

\begin{assumption} \label{asm:indep}
$\{\bm{\theta}_{l}^{(s)}\}_{l=1,\ldots,L;s=1,\ldots,S_l}$ are mutually independent.
\end{assumption}

Therefore, $G(\tilde{\bm{\theta}})$ can be written as

\begin{equation} \label{eq:gen_random_effect}
G(\tilde{\bm{\theta}})=\prod_{l=1}^{L}\prod_{s=1}^{S_l}G_l(\bm{\theta}_l^{(s)})
\qquad
\text{or}
\qquad
dG(\tilde{\bm{\theta}})=\prod_{l=1}^{L}\prod_{s=1}^{S_l}G_l(d\bm{\theta}_l^{(s)}).
\end{equation}

Note that the prior independence assumption across factors within a level (i.e., $s=1,\ldots,S_l$) is very natural for most data structures especially for those involving repeated measurements (see, e.g., \cite{goldstein1986multilevel}, \cite{yau2003finite}, \cite{ng2004modelling}, \cite{boucher2006fixed}, and \cite{ng2007extension}). The prior independence across levels (i.e., $l=1,\ldots,L$) is also often assumed for datasets with multilevel structure (see, e.g., \cite{goldstein1986multilevel} and \cite{mcgilchrist1994estimation}). 

\section{Mixture of experts model with random effect} \label{sec:moe}

Despite of the generality of the above mixed effect model (Equation (\ref{eq:gen_joint_cross})), it is essential to appropriately specify the functional forms of $H$ and $G$ to model a multilevel dataset. However, this is challenging, especially when the space $\tilde{\bm{\Theta}}$ of the latent factors $\tilde{\bm{\theta}}$ is not observed from the dataset. Recall that a multilevel dataset only provides information on how each observation is classified into one of the factors for each level $l$, but not on what the factors are or how to quantify these factors. 

\afterpage{
\begin{figure}[!h]
\begin{center}
\includegraphics[width=\linewidth]{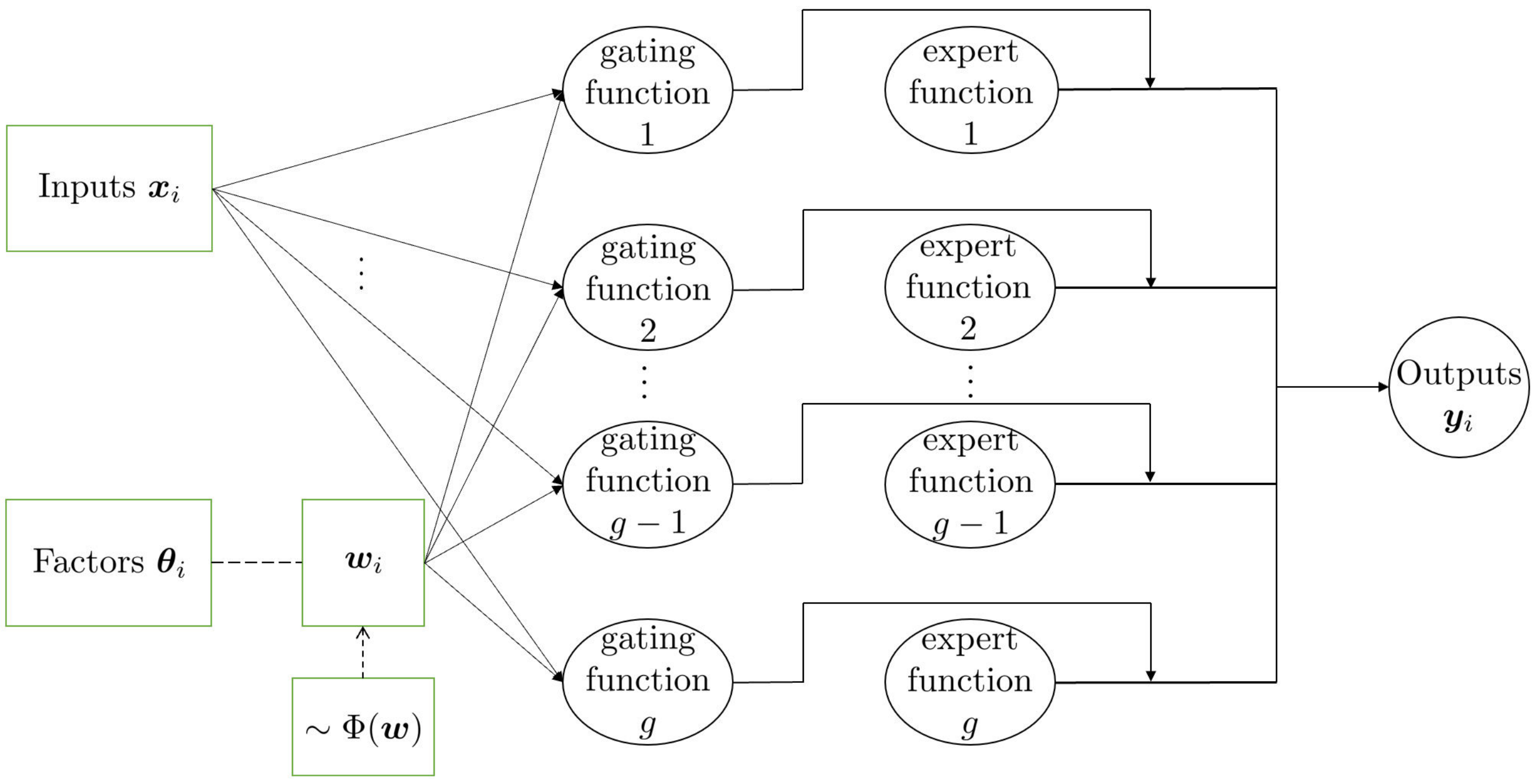}
\end{center}
\caption{Model structure of the MMoE.}
\label{fig:moe}
\end{figure}
}


In this section, we introduce the mixture of experts (MoE) model with random effects, called the mixed MoE (MMoE), as a candidate regression model to cater for multilevel data structure. The justification of the proposed model, which analyzes the ability of the MMoE to accurately approximate the generalized form of the mixed effect model (Equation (\ref{eq:gen_joint_cross})), will be presented in the next section.


Under the MMoE, we assume that each observation $i$ is equipped by $L$ levels of random effects, denoted by an $L$-vector $\bm{w}_i=(w_{i1},\ldots,w_{iL})$. Similar to the mapping of the unobserved factors $\bm{\theta}_i$ introduced in Section \ref{sec:intro}, we also have $w_{il}=w_{i'l}:=w_{l}^{(s)}$ if $c_l(i)=c_l(i')=s$. The only difference between $\bm{\theta}_i$ and $\bm{w}_i$ is that we restrict $w_{il}\in\mathbb{R}$ into a Euclidean space instead of a general space $\tilde{\bm{\Theta}}$, which is unknown and hard to specify. Similar to $\tilde{\bm{\theta}}$, we also define $\bm{w}=\{w_l^{(s)}\}_{l=1,\ldots,L;s=1,\ldots,S_l}$ as the random effects across all levels and factors.


The distribution function of $\bm{y}_i$ conditional on $\bm{x}_i$ and $\bm{w}_i$ is given by
\begin{equation} \label{eq:moe_disn_cross}
F(\bm{y}_i; \bm{\alpha}, \bm{\beta}, \bm{\psi}, g| \bm{x}_{i}, \bm{w}_i) = \sum_{j=1}^{g} \pi_j(\bm{x_i}, \bm{w}_{i}; \bm{\alpha}, \bm{\beta}) F_0(\bm{y}_i; \bm{\psi}_j),
\end{equation}
where $g$ is the number of latent classes, $\pi_j(\bm{x_i}, \bm{w}_{i}; \bm{\alpha}, \bm{\beta})$ is the mixing weight for the $j^{th}$ class (called the gating function), $\bm{\alpha}=\{\alpha_{j0},\bm{\alpha}_{j}:j=1,\ldots,g\}\in\mathcal{A}$ are the regression parameters of the gating function, $\bm{\beta}=\{\bm{\beta}_{j}:j=1,\ldots,g\}\in\mathcal{B}$ are the coefficients of the random effects and $\bm{\psi}=\{\bm{\psi}_j:j=1,\ldots,g\}\in\bm{\Psi}$ are the parameters of a pre-specified multivariate distribution $F_0$ (called the expert function). We also specify $\pi_j(\bm{x}_i,\bm{w}_i; \bm{\alpha}, \bm{\beta})$ as a logit linear gating function, given by

\begin{equation} \label{eq:moe_gate_cross}
    \pi_j(\bm{x}_i,\bm{w}_i; \bm{\alpha}, \bm{\beta}) = \frac{\exp\{\alpha_{j0} + \bm{\alpha}_j^T \bm{x}_i +  \bm{\beta}_j^T \bm{w}_i\}}{\sum_{j'=1}^{g}\exp\{\alpha_{j'0} + \bm{\alpha}_{j'}^T \bm{x}_i +  \bm{\beta}_{j'}^T \bm{w}_i\}}, \quad j = 1, 2, \dots, g.
\end{equation}

Apart from that, the random effects $\{w_{l}^{(s)}\}_{l=1,\ldots,L;s=1,\ldots,S_l}$ are assumed to be independent across $l$ and $s$, and $w_{l}^{(s)}$ follows a fixed pre-specified distribution $\Phi_l$ with no extra parameters in it. Based on the above model specification, the joint distribution of $\bm{y}$ given $\bm{x}$ is

\begin{equation} \label{eq:moe_joint_cross}
\tilde{F}(\bm{y}; \bm{x}):=\tilde{F}(\bm{y}; \bm{\alpha}, \bm{\beta}, \bm{\Psi}, g| \bm{x})=\int\prod_{i=1}^{N} F(\bm{y}_{i}; \bm{\alpha}, \bm{\beta}, \bm{\Psi}| \bm{x}_{i}, \bm{w}_i)d\Phi(\bm{w}),
\end{equation}
where $\Phi$ is the joint distribution of $\bm{w}$, given by

\begin{equation} \label{eq:moe_phi_cross}
\Phi(\bm{w})=\prod_{l=1}^{L}\prod_{s=1}^{S_l}\Phi_l(w_l^{(s)})
\qquad
\text{or}
\qquad
d\Phi(\bm{w})=\prod_{l=1}^{L}\prod_{s=1}^{S_l}\Phi_l(d w_l^{(s)}).
\end{equation}

The model can be interpreted as follows with an illustration displayed in Figure \ref{fig:moe}. Each observation is assigned into one of the $g$ homogeneous subgroups with classification probabilities equal to the gating functions $\pi_j(\bm{x}_i,\bm{w}_i; \bm{\alpha}, \bm{\beta})$. The classification probabilities vary among observations as they depend on both the inputs $\bm{x}_i$ and the unobserved factors $\bm{w}_i$. Conditioned on the subgroup observation $i$ belongs to, the outputs $\bm{y}_i$ are governed by a homogeneous probability distribution $F_0(\bm{y}_i;\bm{\psi}_j)$ independent of $\bm{x}_i$ and $\bm{w}_i$.


One of the noticeable difference between the above model and the standard structure of MoE (i.e., Equation (\ref{eq:moe})) is that here we remove the regression relationship on the expert functions (i.e., the expert functions do not depend on the inputs $\bm{x}_i$), which is considered as a reduced MoE (RMoE) by \cite{fung2019moe1}. Also, note that our model assumes that the level-$l$ random effect $w_{il}$ is the same across all $g$ gating functions (i.e., $w_{il}$ does not depend on $j$). This assumption is different from \cite{ng2007extension}, who considers multiple different independent level-$l$ random effects across the gating functions.

\begin{remark}
The proposed reduced modelling framework may be advantageous in two ways. Firstly, we may select among a wide range of probability distributions as the expert function $F_0$, including more complex non-exponential classes of distributions (e.g., phase-type distributions) where regression modelling on these distributions may be either infeasible or computationally challenging. Secondly, simplified model structure is favourable for interpretation, as our proposed model enables clustering of observations into homogeneous subgroups, and explains the variability of each level-$l$ factor by a single source (i.e., $w_{il}\in\mathbb{R}$) instead of multiple sources by \cite{ng2007extension}. 

\end{remark}

The remaining issue is: how does such a reduced structure affect its model flexibility? To justify the proposed modelling structure, we will demonstrate the denseness property of our proposed model in the following section, which means that the MMoE model structure of Equation (\ref{eq:moe_joint_cross}) can approximate any generalized form of mixed effect models expressed by Equation (\ref{eq:gen_joint_cross}). This will provide evidences suggesting that our proposed model is parsimonious. In other words, the MMoE has the simplest structure without harming its representation capability.


\section{Denseness theory} \label{sec:dense}

This section studies the approximation capability of the class of MMoE models. Our goal is to show that the proposed MMoE is versatile enough to approximate any mixed effects models under mild regularity conditions, even if the MMoE is constructed in a reduced form: (i) the gating function is restricted to be a logit linear gating; (ii) regression link is removed in the expert functions; (iii) the random effects are restricted to follow some fixed pre-determined distributions. Before that, we need to technically formulate a class of mixed effects models and define ``denseness" for mixed effects models. These definitions are the extensions of \cite{fung2019moe1}, who defines ``regression distributions" and ``denseness" in the regression settings without considering random effects. 


Denote $\mathcal{T}:=\mathcal{T}_1\times\ldots\times\mathcal{T}_L$ as a collection of some spaces of $\bm{\Theta}:=\bm{\Theta}_1\times\ldots\times\bm{\Theta}_L$, $\mathcal{H}$ as a collection of some distribution functions $H$ on $(\bm{y}_i|\bm{x}_i,\bm{\theta}_i)$, and $\mathcal{G}_l$ as a collection of probability measures $G_l$ on $\bm{\theta}_l^{(s)}$ with $\mathcal{G}:=\mathcal{G}_1\times\ldots\times\mathcal{G}_L$. Also, let $\mathcal{C}$ be a set containing all possible mappings $\bm{c}(\cdot)$, and define a vector $\bm{S}=(S_1,\ldots,S_L)$ with $\mathcal{S}=\mathbb{N}^L$. ``A class of mixed effects models" and ``mixed effects distributions" are first defined as follows:

\begin{definition} \label{def:model_class}
A class of mixed effects models $\mathcal{M}_L(\mathcal{X};\mathcal{T},\mathcal{H},\mathcal{G}):=\{\tilde{H}(\cdot;\mathcal{X};\bm{\Theta},H,G):\bm{\Theta}_l\in\mathcal{T}_l,H\in\mathcal{H},G_l\in\mathcal{G}_l,l=1,\ldots,L\}$ is a collection of mixed effects distributions $\tilde{H}(\cdot;\mathcal{X};\bm{\Theta},H,G)$, where each mixed effects distribution $\tilde{H}(\cdot;\mathcal{X};\bm{\Theta},H,G):=\{\tilde{H}(\bm{y}|\bm{x}):=\tilde{H}(\bm{y}|\bm{x};\bm{\Theta},H,G)=\int_{\tilde{\bm{\Theta}}}\left[\prod_{i=1}^{N}H(\bm{y}_i|\bm{x}_i,\bm{\theta}_i)\right]dG(\tilde{\bm{\theta}}):\bm{x}_i\in\mathcal{X},i\in\{1,\ldots,N\},N\in\mathbb{N},\bm{S}\in\mathcal{S},\bm{c}\in\mathcal{C}\}$ is itself a collection of joint probability distributions.
\end{definition}

In the spirit of \cite{fung2019moe1}, denseness is defined in the sense of weak convergence of probability distributions. Therefore, before defining denseness, we need to define weak convergence of mixed effects distributions as follows:
\begin{definition} \label{def:convergence}
Consider a sequence of mixed effects distributions $\tilde{\bm{H}}^{(n)}:=\tilde{H}(\cdot;\mathcal{X};\bm{\Theta}^{(n)},H^{(n)},G^{(n)})$ and a target mixed effects distribution $\tilde{\bm{H}}:=\tilde{H}(\cdot;\mathcal{X};\bm{\Theta},H,G)$. We say that $\{\tilde{\bm{H}}^{(n)}\}_{n=1,2,\ldots}$ weakly converge to $\tilde{\bm{H}}$ if and only if for every given $\bm{x}_i\in\mathcal{X}$ (for all $i\in\{1,\ldots,N\}$), $N\in\mathbb{N}$, $\bm{S}\in\mathcal{S}$ and $\bm{c}\in\mathcal{C}$, we have $\tilde{H}(\cdot|\bm{x};\bm{\Theta}^{(n)},H^{(n)},G^{(n)})\xrightarrow{\mathcal{D}}\tilde{H}(\cdot|\bm{x};\bm{\Theta},H,G)$ as $n\rightarrow\infty$, where $\xrightarrow{\mathcal{D}}$ represents a weak convergence or convergence in distribution. If the distributional convergence is uniform across any compact input space $\bar{\mathcal{X}}\subseteq\mathcal{X}$, i.e. $\tilde{H}(\bm{y}|\bm{x};\bm{\Theta}^{(n)},H^{(n)},G^{(n)})\rightarrow\tilde{H}(\bm{y}|\bm{x};\bm{\Theta},H,G)$ uniformly on $(\bm{y},\bm{x})$ with $\bm{x}_i\in\bar{\mathcal{X}}$ (for all $i\in\{1,\ldots,N\}$), then we say that $\{\tilde{\bm{H}}^{(n)}\}_{n=1,2,\ldots}$ weakly converge to $\tilde{\bm{H}}$ compactly.
\end{definition}

Now, we are able to extend the formalism of \cite{fung2019moe1} and define denseness in the settings of mixed effects models:

\begin{definition} \label{def:dense}
Consider two classes of mixed effects models $\mathcal{M}_{1L}:=\mathcal{M}_L(\mathcal{X};\mathcal{T}_1,\mathcal{H}_1,\mathcal{G}_1)$ and $\mathcal{M}_{2L}:=\mathcal{M}_L(\mathcal{X};\mathcal{T}_2,\mathcal{H}_2,\mathcal{G}_2)$. $\mathcal{M}_{1L}$ is dense in $\mathcal{M}_{2L}$ if and only if for every $(\bm{\Theta}_2,H_2,G_2)\in\mathcal{T}_2\times\mathcal{H}_2\times\mathcal{G}_2$, there exists a sequence of $\{(\bm{\Theta}_1^{(n)},H_1^{(n)},G_1^{(n)})\}_{n=1,2,\ldots}$ with $(\bm{\Theta}_1^{(n)},H_1^{(n)},G_1^{(n)})\in\mathcal{T}_1\times\mathcal{H}_1\times\mathcal{G}_1$ such that the mixed effects distributions $\{\tilde{\bm{H}}^{(n)}_1:=\tilde{H}(\cdot;\mathcal{X};\bm{\Theta}_1^{(n)},H_1^{(n)},G_1^{(n)})\}_{n=1,2,\ldots}$ weakly converge to $\tilde{\bm{H}}_2:=\tilde{H}(\cdot;\mathcal{X};\bm{\Theta}_2,H_2,G_2)$. If $\{\tilde{\bm{H}}^{(n)}_1\}_{n=1,2,\ldots}$ weakly converge to $\tilde{\bm{H}}_2$ compactly, then $\mathcal{M}_{1L}$ is said to be compactly dense in $\mathcal{M}_{2L}$.
\end{definition}

The above definition of denseness means that any mixed effects models in the class $\mathcal{M}_{2L}$ can be represented or approximated arbitrarily well by the models within another class $\mathcal{M}_{1L}$, in the sense of weak convergence of joint distributions $\tilde{H}(\bm{y}|\bm{x})$ across all $N$ observations. Less technically speaking,  $\mathcal{M}_{1L}$ can be interpreted as a model class that is at least as rich or flexible as the class $\mathcal{M}_{2L}$.


With all relevant definitions constructed, we now consider a class of generalized mixed effects models $\mathcal{M}_L^{\text{gen}}(\mathcal{X}):=\mathcal{M}_L(\mathcal{X};\mathcal{T}^{\text{gen}},\mathcal{H}^{\text{gen}},\mathcal{G}^{\text{gen}})$ expressed in the form of Equation (\ref{eq:gen_joint_cross}), where $\mathcal{T}^{\text{gen}}$ (also denoted as $\mathcal{T}^{\text{gen}}:=\mathcal{T}^{\text{gen}}_1\times\ldots\times\mathcal{T}^{\text{gen}}_L$), $\mathcal{H}^{\text{gen}}$ and $\mathcal{G}^{\text{gen}}$ all correspond to collections of any spaces or functions satisfying the following two mild technical assumptions: 

\begin{assumption} \label{asm:metric}
Each space $\bm{\Theta}_l\in\mathcal{T}^{\text{gen}}_l$ is equipped by a complete separable metric $d_{\bm{\Theta}_l}$.
\end{assumption}

\begin{assumption} \label{asm:continuity}
For every probability distribution functions $H\in\mathcal{H}^{\text{gen}}$, $H(\bm{y}_i|\bm{x}_i,\bm{\theta}_i)$ is continuous with respect to $(\bm{y}_i,\bm{x}_i,\bm{\theta}_i)$.
\end{assumption}


Now, we turn to the class of MMoE with a pre-determined choice of expert function $F_0$ and joint distribution of random effects $\Phi$. We express the class of MMoE as $\mathcal{M}_L^{\text{MMoE}}(\mathcal{X};F_0,\Phi):=\mathcal{M}_L(\mathcal{X};\mathcal{T}^{\text{MMoE}},\mathcal{H}^{\text{MMoE}},\mathcal{G}^{\text{MMoE}})$, where the two sets $\mathcal{T}_l^{\text{MMoE}}=\{\mathbb{R}\}$ and $\mathcal{G}^{\text{MMoE}}=\{\Phi\}$ only contain a single element each. Also, the set of mixed effects distributions is given by $\mathcal{H}^{\text{MMoE}}=\{F(\cdot;\bm{\alpha},\bm{\beta},\bm{\psi},g|\cdot,\cdot):\bm{\alpha}\in\mathcal{A},\bm{\beta}\in\mathcal{B},\bm{\psi}\in\bm{\Psi},g\in\mathbb{N}\}$, where $F$ is given by the form of Equation (\ref{eq:moe_disn_cross}). We also impose the following two restrictions on the choices of $F_0$ and $\Phi$, which are essential for the denseness property of the class of MMoE models:

\begin{assumption} \label{asm:denseness_condition}
$F_0$ satisfies the denseness condition outlined by Proposition 3.1 of \cite{fung2019moe1}, meaning that for every $\bm{q}\in \mathbb{R}^K$, there exists a sequence of parameters $\{\bm{\psi}^{(n)}(\bm{q})\}_{n=1,2,\ldots}$ such that $F_0(\cdot,\bm{\psi}^{(n)}(\bm{q}))\xrightarrow{\mathcal{D}}\bm{q}$ as $n\xrightarrow{}\infty$.
\end{assumption}

\begin{assumption} \label{asm:cont_random_effect_disn}
$\Phi_l$ is a continuous distribution function for every $l=1,\ldots,L$.
\end{assumption}

The above two assumptions are not necessarily mild. As discussed in \cite{fung2019moe1}, some common distributions, such as Pareto and exponential distributions, do not satisfy the denseness condition under Assumption \ref{asm:denseness_condition}. Moreover, Assumption \ref{asm:cont_random_effect_disn} does not hold whenever we choose any discrete distributions for $\Phi_l$. Nonetheless, note that the expert function $F_0$ and random effect distribution $\Phi_l$ are both pre-determined, so we have the control to choose suitable functions that fulfill Assumptions \ref{asm:denseness_condition} and \ref{asm:cont_random_effect_disn} before modelling a multilevel dataset via the MMoE. For example, one may choose Gamma, Weibull, log-normal, or inverse-Burr distributions as the expert function $F_0$ (\cite{fung2019moe1}), and select a normal distribution as the random effect distribution $\Phi_l$. 


We now present the main result which justifies the representation capability of the proposed class of MMoE models. The proof appears in the Appendix.

\begin{theorem} \label{thm:denseness_cross}
Suppose that Assumptions \ref{asm:indep} to \ref{asm:cont_random_effect_disn} are satisfied. Then, $\mathcal{M}_L^{\text{MMoE}}(\mathcal{X};F_0,\Phi)$ is compactly dense in $\mathcal{M}_L^{\text{gen}}(\mathcal{X})$.
\end{theorem}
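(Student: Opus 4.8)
The plan is to build the approximating MMoE sequence through three successive reductions, coupling the latent factors $\tilde{\bm\theta}$ to the random effects $\bm w$ so that the induced joint laws of $\bm y\mid\bm x$ agree in the limit. The guiding idea is that the flexible logit gating of Equation~(\ref{eq:moe_gate_cross}), together with the expert denseness of Assumption~\ref{asm:denseness_condition}, lets the MMoE conditional $F(\cdot;\bm\alpha,\bm\beta,\bm\psi,g\mid\bm x_i,\bm w_i)$ approximate \emph{any} continuous regression distribution in the enlarged input $(\bm x_i,\bm w_i)$; the role of the fixed continuous $\Phi_l$ is merely to supply, for each unit, an atomless source of randomness that the gating can reshape into the desired random-effect law. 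Concretely, I would (i) replace each $G_l$ by a finitely supported law, (ii) encode the resulting discrete random effect through the continuous $\Phi_l$ by binning its range and interpolating, producing a \emph{continuous} target conditional in $(\bm x,\bm w)$, (iii) invoke the reduced-MoE denseness of \cite{fung2019moe1} to approximate that conditional and lift the convergence to the joint law, and finally combine the three stages by a diagonal argument.

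For step (i), since each $\bm\Theta_l$ is Polish (Assumption~\ref{asm:metric}), finitely supported measures are weakly dense in the Borel probability measures on $\bm\Theta_l$, so I take $G_l^{(m)}=\sum_{k=1}^m p_{lk}\delta_{\vartheta_{lk}}\xrightarrow{\mathcal D}G_l$; by Assumption~\ref{asm:indep} the product $G^{(m)}=\prod_l\prod_s G_l^{(m)}$ still factorises. To see that this perturbs the target only weakly, I test the joint law of Equation~(\ref{eq:gen_joint_cross2}) against a bounded continuous $\phi(\bm y)$ and write it as $\int_{\tilde{\bm\Theta}}\Psi(\tilde{\bm\theta})\,dG(\tilde{\bm\theta})$ with $\Psi(\tilde{\bm\theta})=\int\phi(\bm y)\prod_i dH(\bm y_i\mid\bm x_i,\bm\theta_i)$; Assumption~\ref{asm:continuity} makes $\Psi$ bounded and continuous in $\tilde{\bm\theta}$, so $G^{(m)}\xrightarrow{\mathcal D}G$ yields convergence of the joint laws. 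This reduces the problem to a target whose random effects take finitely many values $\vartheta_{lk}\in\bm\Theta_l$.

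The crux is step (ii), where the possibly badly behaved space $\bm\Theta_l$ must be eliminated in favour of $\mathbb R$. For each level $l$ I partition the range of $\Phi_l$ into consecutive intervals $I_{l1},\dots,I_{lm}$ with $\Phi_l(I_{lk})=p_{lk}$, which is feasible precisely because $\Phi_l$ is continuous (Assumption~\ref{asm:cont_random_effect_disn}); the event $\{w_l^{(s)}\in I_{lk}\}$ then has probability $p_{lk}$, mimicking the atom $\vartheta_{lk}$. On the bulk of each product cell $\{\,\bm w: w_{il}\in I_{l,k_l}\ \forall l\,\}$ I set the target conditional equal to $H(\cdot\mid\bm x,(\vartheta_{l,k_l})_l)$, and across thin boundary layers between neighbouring cells I interpolate by a continuous convex combination (a multilinear ramp) of the finitely many distributions $H(\cdot\mid\bm x,(\vartheta_{l,k_l})_l)$, which inherits continuity in $\bm x$ from Assumption~\ref{asm:continuity}, obtaining a genuinely continuous regression distribution $\bar H(\cdot\mid\bm x,\bm w)$. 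Because $\Phi=\prod_{l,s}\Phi_l$ and $G^{(m)}$ share the same product-over-$(l,s)$ structure, and because $\Phi_l$ is atomless so the boundary layers carry vanishing mass as their width tends to $0$, the joint law built from $(\bar H,\Phi)$ converges weakly to that built from $(H,G^{(m)})$; here continuity of $H$ in $\bm\theta$ plays no role, since only finitely many fixed evaluations $\vartheta_{lk}$ enter. I expect the bookkeeping of this multilevel interpolation — verifying that the convex weights are continuous, sum to one on every cell, and that the product-over-$\bm w$ integral collapses to the correct $(H,G^{(m)})$ joint as the layers shrink — to be the main technical obstacle.

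Finally, for step (iii) the target $\bar H(\cdot\mid\bm x_i,\bm w_i)$ is a continuous regression distribution, so the reduced-MoE denseness result of \cite{fung2019moe1} — applicable because Assumption~\ref{asm:denseness_condition} supplies the required expert condition (its Proposition~3.1) — yields a sequence of reduced MoEs $F(\cdot\mid\bm x_i,\bm w_i)$ converging to it weakly, uniformly over $(\bm x_i,\bm w_i)$ in compacts. To lift this to the joint law of Equation~(\ref{eq:moe_joint_cross}) I use that weak convergence of each conditional factor implies weak convergence of the product law on $\mathcal Y^N$ (the observations being conditionally independent), and then integrate over $\bm w\sim\Phi$ by dominated convergence: the inner $\phi$-integral converges for $\Phi$-a.e.\ $\bm w$ and is bounded by $\|\phi\|_\infty$, while tightness of the fixed $\Phi$ controls the non-compact $\bm w$-tails and the compact convergence supplies uniformity over $\bm x_i\in\bar{\mathcal X}$. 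Assembling the three stages with a diagonal choice of the discretisation level $m$, the boundary-layer width, and the reduced-MoE index then produces a single MMoE sequence that is compactly dense in $\mathcal M_L^{\text{gen}}(\mathcal X)$ in the sense of Definitions~\ref{def:convergence}--\ref{def:dense}, as required.
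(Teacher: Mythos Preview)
Your three-stage strategy --- discretise each $G_l$, couple the resulting atoms to $\Phi_l$-intervals of matching mass, then approximate the induced $(\bm x,\bm w)$-regression distribution by a reduced MoE --- is the same architecture as the paper's four-step proof: its Steps~1--2 execute your (i)--(ii) via explicit compact partitions of $\bm\Theta_l$ and the softmax-to-indicator Lemma~\ref{lm:xi}, and its Steps~3--4 carry out your (iii) by a direct grid construction in $(\bm y,\bm x)$ rather than a black-box citation.

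The substantive gap is in stage~(iii). The denseness theorem of \cite{fung2019moe1} that you invoke requires Lipschitz continuity of the target regression distribution in the covariates together with an explicit tightness hypothesis; the present paper states in Section~\ref{sec:intro} that precisely these assumptions are relaxed here and that consequently ``the proof techniques of this paper are quite different.'' Your interpolated $\bar H(\cdot\mid\bm x,\bm w)$ is Lipschitz in $\bm w$ by design of the ramps, but in $\bm x$ it inherits only the plain continuity of Assumption~\ref{asm:continuity}, so the black-box citation does not apply as stated. This is why the paper reworks the MoE approximation from scratch in Steps~3--4, replacing the Lipschitz bound by uniform continuity on compacts and deriving the needed tightness separately (Lemma~\ref{lm:tight}). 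Your continuous-interpolation device in stage~(ii) is a neat variation that the paper does not use --- it works directly with indicator functions approximated by the scaled softmax $\xi_{\bm d}^{(u)}$ --- but it does not by itself close this gap. A secondary point: your weak-convergence argument in stage~(i), testing against bounded continuous $\phi$, yields pointwise-in-$\bm x$ convergence of the joint law, whereas \emph{compact} denseness (Definition~\ref{def:convergence}) asks for uniformity of the CDF over $(\bm y,\bm x)$ with $\bm x_i\in\bar{\mathcal X}$; the paper obtains this directly from its explicit $\epsilon$-bounds (Equation~(\ref{eq:apx1:step1:bound_final})), while your route would require an additional equicontinuity argument.
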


\begin{remark}
The theorem above requires that the target distribution $H$ is a continuous distribution with respect to $\bm{y}_i$ (see Assumption \ref{asm:continuity}). However, it is also important to investigate into the approximation results for discrete distributions (see e.g. \cite{jiang1999hierarchical} and \cite{fung2019moe1}). As discussed by \cite{norets2010approximation}, any discrete distribution can be represented by a continuous latent distribution. Then, it is obvious that Theorem \ref{thm:denseness_cross} still holds for discrete distributions if the denseness condition in Assumption \ref{asm:denseness_condition} is changed from $\bm{q}\in\mathbb{R}^K$ to $\bm{q}\in\mathbb{N}^K$.
\end{remark}

Clearly, the denseness property provides a theoretical justification for the flexibility of the MMoE to simultaneously capture various model characteristics, including the joint distributions (e.g., multimodal distributions and dependence among outputs), the regression patterns (e.g., non-linear links and interactions among covariates), the random intercept (e.g., peculiar effects of the unobserved factors to the outputs) and the random slopes (e.g., interactions between covariates and random effects). As a result, the denseness property theoretically demonstrates the ability of the MMoE to potentially fit and resemble well any multilevel data which can be generated among a rich class of mixed effects models. Also, since our proposed model has a reduced structure (explained in the previous section), the denseness property also suggests that the proposed model is parsimonious.

\section{Nested mixed effects models for hierarchical data} \label{sec:nested}

Nested mixed effect model is a special case of the model described in Section \ref{sec:gen} with multiple random effects. Specifically, the $L$ random effects are levelled in a way that the first and $L$-th random effects correspond to the highest and the lowest levels respectively. Any observations sharing the same lower level random effect unit must also have the same upper level unit. If the random effects are not nested, the corresponding mixed effect model is called a ``crossed" mixed effect model. The school problem (see, e.g., \cite{aitkin1986statistical}) is a classical example where the random effects are nested. In this example, there are two factors affecting a student's performance: School and classroom. Since classmates must come from the same school, we have $L=2$ with ``school" and ``classroom" respectively being the first and second levels of random effects.


Before defining a nested model, it is more convenient to adopt an alternative set of notations to identify the observations. Denote $\bm{i}=(i_1,i_2,\ldots,i_{L+1})$ as an identifier of an observation, and $\bm{i}_l=(i_1,i_2,\ldots,i_l)$ as an identifier up to level $l$. Here, $i_1$ is a label of the level-$1$ factor unit. For $l=2,3,\ldots,L$, $i_l$ is a label of the level-$l$ factor unit given that the labels of the first $(l-1)$ factors levels are $\bm{i}_{l-1}$. $i_{L+1}$ represents the observation label given that the $L$ factors are labelled by $\bm{i}_L$. For example, in the school problem with $L=2$, $\bm{i}=(2,3,5)$ corresponds to the fifth student of the third classroom of the second school. 


Furthermore, denote $N_0$ as the number of level-$1$ factor units, so that the support of $i_1$ is given by $\mathcal{I}_1:=\{1,\ldots,N_0\}$. For $l=2,3,\ldots,L$, define $N_{\bm{i}_{l-1}}$ as the number of level-$l$ factor units where the first $(l-1)$ factor levels are labelled as $\bm{i}_{l-1}$, such that the support of $\bm{i}_l$ is $\mathcal{I}_l:=\{\bm{i}_l:\bm{i}_{l-1}\in\mathcal{I}_{l-1},i_l=1,\ldots,N_{\bm{i}_{l-1}}\}$. Similarly, $N_{\bm{i}_L}$ is the number of observations having $\bm{i}_L$ as the label of the $L$ factors. Then, the support of $\bm{i}$ is $\mathcal{I}:=\{\bm{i}:\bm{i}_L\in\mathcal{I}_L,i_{L+1}=1,\ldots,N_{\bm{i}_L}\}$. Also, the total number of observations is given by $N=\sum_{i_1=1}^{N_0}\sum_{i_2=1}^{N_{\bm{i}_{1}}}\cdots\sum_{i_L=1}^{N_{\bm{i}_{L-1}}}N_{\bm{i}_L}$. A tree diagram in Figure \ref{fig:hierarchical} visualizes the structure of nested hierarchical data.

As a result, the nested multilevel dataset is given by $(\bm{y},\bm{x}):=\{(\bm{y}_{\bm{i}},\bm{x}_{\bm{i}})\}_{\bm{i}\in\mathcal{I}}$, where $\bm{x}_{\bm{i}}\in\mathcal{X}\subseteq\mathbb{R}^P$ and $\bm{y}_{\bm{i}}\in\mathcal{Y}\subseteq\mathbb{R}^K$ are respectively the input and output of an observation labelled as $\bm{i}\in\mathcal{I}$. 


In this section, we will define the generalized class of nested mixed effects models, formulate the proposed MMoE model with nested random effects, and construct the denseness theory for the class of nested MMoE.

\afterpage{
\begin{figure}[!h]
\centering
\includegraphics[width=\linewidth]{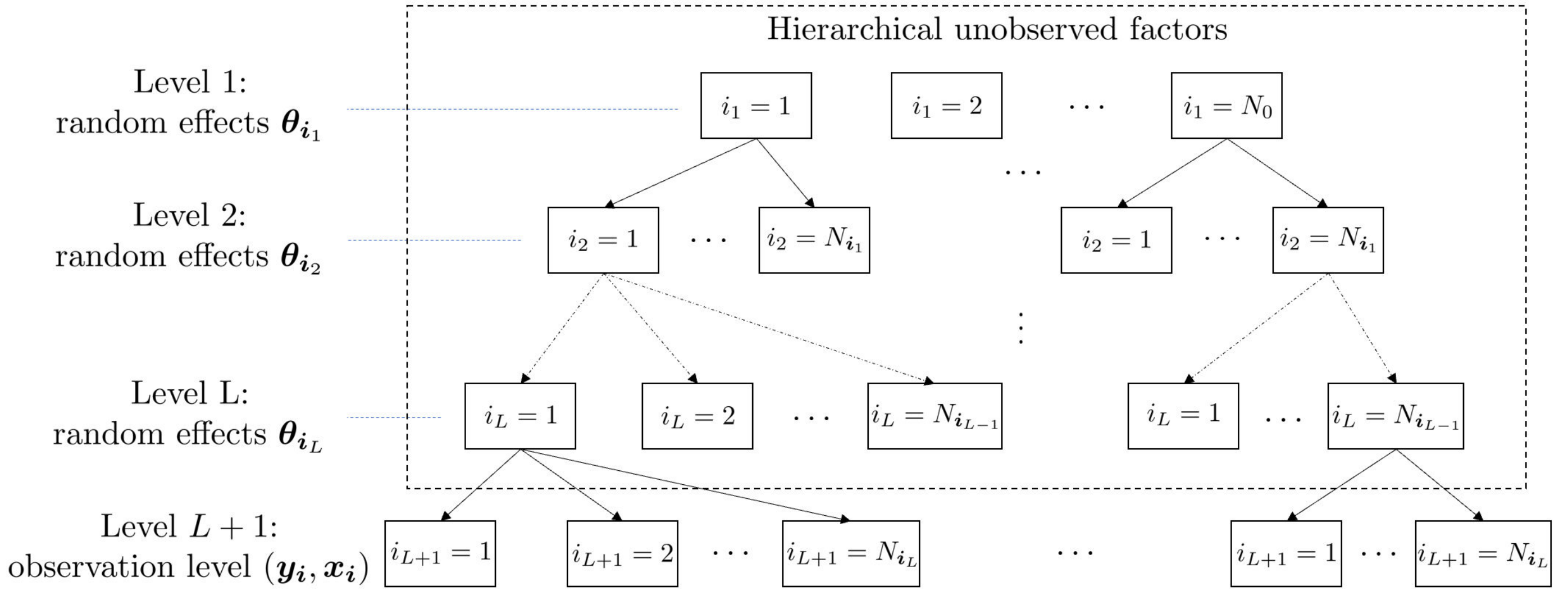}
\caption{Hierarchical data structure and modelling framework.}
\label{fig:hierarchical}
\end{figure}
}

\subsection{Generalized nested mixed effect model}

Similar to the MMoE defined in Section \ref{sec:gen}, under the nested MMoE, the response $\bm{y}_{\bm{i}}$ depends on its covariates $\bm{x}_{\bm{i}}$ and $L$ levels of latent random factors $\bm{\theta}_{\bm{i}_1},\ldots,\bm{\theta}_{\bm{i}_L}$ illustrated in Figure \ref{fig:hierarchical}. The dependence assumption among the latent factors is stated as follows:

\begin{assumption} \label{asm:dep_nested}
Conditioned on $\bm{\theta}_{\bm{i}_1},\ldots,\bm{\theta}_{\bm{i}_L}$, the $N$ observations are independent. The set of parents of $\bm{\theta}_{\bm{i}_l}$ is given by $\text{pa}(\bm{\theta}_{\bm{i}_l})=(\bm{\theta}_{\bm{i}_1},\ldots,\bm{\theta}_{\bm{i}_{(l-1)}})$ for $l=1,\ldots,L$.
\end{assumption}

In other words, the lower level factor only depends directly on its corresponding upper level factors. Under a nested hierarchical data structure, we are able to relax the independence assumption in Assumption \ref{asm:indep} by allowing the dependence of lower level random effects on their parents (upper level effects). For $l=1,\ldots,L$, we also denote $G_l$ as the distribution of the level-$l$ factor conditioned on $\text{pa}(\bm{\theta}_{\bm{i}_l})$.


The joint distribution of $\bm{y}$ given $\bm{x}$ is given by
\begin{equation} \label{eq:gen_joint_nested}
\tilde{H}(\bm{y}|\bm{x})=\int_{\tilde{\bm{\Theta}}}\left[\prod_{\bm{i}\in\mathcal{I}}H(\bm{y}_{\bm{i}};\bm{x}_{\bm{i}}|\bm{\theta}_{\bm{i}})\right]dG(\tilde{\bm{\theta}})
\end{equation}
with
\begin{equation} \label{eq:gen_latent_nested}
G(\tilde{\bm{\theta}})=\prod_{i_1=1}^{N_0}G_1(\bm{\theta}_{\bm{i}_1})\prod_{i_2=1}^{N_{\bm{i}_1}}G_2(\bm{\theta}_{\bm{i}_2}|\bm{\theta}_{\bm{i}_1})\cdots\prod_{i_{L}=1}^{N_{\bm{i}_{L-1}}}G_{L}(\bm{\theta}_{\bm{i}_{L}}|\bm{\theta}_{\bm{i}_1},\ldots,\bm{\theta}_{\bm{i}_{L-1}}),
\end{equation}
where $\tilde{\bm{\theta}}=\{\bm{\theta}_{\bm{i}_l}:\bm{i}_l\in\mathcal{I}_l,l=1,\ldots,L\}$.


\subsection{Nested mixed mixture of experts model}
Analogous to the MMoE introduced in Section \ref{sec:moe}, we construct a nested MMoE for hierarchical data. Denote $\bm{w}_{\bm{i}}=(w_{\bm{i}_1},\ldots,w_{\bm{i}_L})\in\mathbb{R}^{L}$ as the $L$ levels of random effects of observation $\bm{i}$. Also, define $\bm{w}=\{w_{\bm{i}_l}\}_{\bm{i}_l\in\mathcal{I}_l,l=1,\ldots,L}$ as all the random effects aggregated across all observations. With a slight change of notations from Equations (\ref{eq:moe_disn_cross}) and (\ref{eq:moe_gate_cross}),the distribution function of $\bm{y}_{\bm{i}}$ conditional on $\bm{x}_{\bm{i}}$ and $\bm{w}_{\bm{i}}$ is then
\begin{equation} \label{eq:moe_disn_nested}
F(\bm{y}_{\bm{i}}; \bm{\alpha}, \bm{\beta}, \bm{\psi}, g| \bm{x}_{\bm{i}}, \bm{w}_{\bm{i}}) = \sum_{j=1}^{g} \pi_j(\bm{x_{\bm{i}}}, \bm{w}_{\bm{i}}; \bm{\alpha}, \bm{\beta}) F_0(\bm{y}_{\bm{i}}; \bm{\psi}_j),
\end{equation}
where the gating function given by
\begin{equation}
    \pi_j(\bm{x}_{\bm{i}},\bm{w}_{\bm{i}}; \bm{\alpha}, \bm{\beta}) = \frac{\exp\{\alpha_{j0} + \bm{\alpha}_j^T \bm{x}_{\bm{i}} +  \bm{\beta}_j^T \bm{w}_{\bm{i}}\}}{\sum_{j'=1}^{g}\exp\{\alpha_{j'0} + \bm{\alpha}_{j'}^T \bm{x}_{\bm{i}} +  \bm{\beta}_{j'}^T \bm{w}_{\bm{i}}\}}, \quad j = 1, 2, \dots, g.
\end{equation}

Similar to Section \ref{sec:moe}, the random effects $\{w_{\bm{i}_l}\}_{l=1,\ldots,L;i_l=1,\ldots,N_{\bm{i}_{l-1}}}$ are constructed to be independent across $l$ and $i_l$ with $w_{\bm{i}_l}\sim \Phi_l$. This construction is simplified from Assumption \ref{asm:dep_nested} of the generalized nested mixed models where the random effects may depend on their parents. Adapting from Equations (\ref{eq:moe_joint_cross}) and (\ref{eq:moe_phi_cross}), the joint distribution of $\bm{y}$ given $\bm{x}$ is

\begin{equation} \label{eq:moe_joint_nested}
\tilde{F}(\bm{y}; \bm{x}):=\tilde{F}(\bm{y}; \bm{\alpha}, \bm{\beta}, \bm{\Psi}, g| \bm{x})=\int\prod_{i\in\mathcal{I}} F(\bm{y}_{\bm{i}}; \bm{\alpha}, \bm{\beta}, \bm{\Psi}| \bm{x}_{\bm{i}}, \bm{w}_{\bm{i}})d\Phi(\bm{w}),
\end{equation}
where $\Phi$ is the joint distribution of $\bm{w}$ given by

\begin{equation} \label{eq:moe_phi_nested}
\Phi(\bm{w})=\prod_{l=1}^{L}\prod_{\bm{i}_l\in\mathcal{I}_l}\Phi_l(w_{\bm{i}_l})
\qquad
\text{or}
\qquad
d\Phi(\bm{w})=\prod_{l=1}^{L}\prod_{\bm{i}_l\in\mathcal{I}_l}\Phi_l(d w_{\bm{i}_l}).
\end{equation}

From Equation (\ref{eq:moe_phi_nested}) above, the random effects are still assumed to be independent under the nested MMoE. However, we will show in the following subsection that such a specification suffices to approximate the dependence of lower level random effects on their parents under a hierarchical data structure.


\subsection{Denseness theory for nested MMoE}
\sloppy Analogous to Section \ref{sec:dense}, it is desirable to develop an approximation theory for the nested MMoE in the space of the generalized nested mixed effect models. Denote $\bm{N}=(N_0,\{N_{\bm{i}_1}\}_{\bm{i}_1\in\mathcal{I}_1},\{N_{\bm{i}_2}\}_{\bm{i}_2\in\mathcal{I}_2},\ldots,\{N_{\bm{i}_L}\}_{\bm{i}_L\in\mathcal{I}_L})$ as the number of factors belonging to each parent factors for each level with $N_0\in\mathbb{N}$ and $N_{\bm{i}_l}\in\mathbb{N}$ for $l=1,\ldots,L$, and $\mathcal{N}$ contains all combinations of possible $\bm{N}$. Other notations, unless specified otherwise, are consistent to those defined by Section \ref{sec:dense}. The equivalent definitions analogous to Section \ref{sec:dense} for hierarchical data structure are listed as follows:

\begin{definition}
A class of nested mixed effects models $\mathcal{M}_L(\mathcal{X};\mathcal{T},\mathcal{H},\mathcal{G}):=\{\tilde{H}(\cdot;\mathcal{X};\bm{\Theta},H,G):\bm{\Theta}_l\in\mathcal{T}_l,H\in\mathcal{H},G_l\in\mathcal{G}_l,l=1,\ldots,L\}$ is a collection of nested mixed effects distributions $\tilde{H}(\cdot;\mathcal{X};\bm{\Theta},H,G)$, where each nested mixed effects distribution $\tilde{H}(\cdot;\mathcal{X};\bm{\Theta},H,G):=\{\tilde{H}(\bm{y}|\bm{x}):=\tilde{H}(\bm{y}|\bm{x};\bm{\Theta},H,G)=\int_{\tilde{\bm{\Theta}}}\left[\prod_{\bm{i}\in\mathcal{I}}H(\bm{y}_i|\bm{x}_{\bm{i}},\bm{\theta}_{\bm{i}})\right]dG(\tilde{\bm{\theta}}):\bm{x}_{\bm{i}}\in\mathcal{X},\bm{i}\in\mathcal{I},\bm{N}\in\mathcal{N}\}$ is itself a collection of joint probability distributions.
\end{definition}

\begin{definition}
Consider a sequence of nested mixed effects distributions $\tilde{\bm{H}}^{(n)}:=\tilde{H}(\cdot;\mathcal{X};\bm{\Theta}^{(n)},H^{(n)},G^{(n)})$ and a target nested mixed effects distribution $\tilde{\bm{H}}:=\tilde{H}(\cdot;\mathcal{X};\bm{\Theta},H,G)$. We say that $\{\tilde{\bm{H}}^{(n)}\}_{n=1,2,\ldots}$ weakly converge to $\tilde{\bm{H}}$ if and only if for every given $\bm{x}_{\bm{i}}\in\mathcal{X}$ (for all ${\bm{i}}\in\mathcal{I}$), $\bm{N}\in\mathcal{N}$, we have $\tilde{H}(\cdot|\bm{x};\bm{\Theta}^{(n)},H^{(n)},G^{(n)})\xrightarrow{\mathcal{D}}\tilde{H}(\cdot|\bm{x};\bm{\Theta},H,G)$ as $n\rightarrow\infty$, where $\xrightarrow{\mathcal{D}}$ represents a weak convergence or convergence in distribution. If the distributional convergence is uniform across any compact input space $\bar{\mathcal{X}}\subseteq\mathcal{X}$, i.e. $\tilde{H}(\bm{y}|\bm{x};\bm{\Theta}^{(n)},H^{(n)},G^{(n)})\rightarrow\tilde{H}(\bm{y}|\bm{x};\bm{\Theta},H,G)$ uniformly on $(\bm{y},\bm{x})$ with $\bm{x}_i\in\bar{\mathcal{X}}$ (for all $i\in\{1,\ldots,N\}$), then we say that $\{\tilde{\bm{H}}^{(n)}\}_{n=1,2,\ldots}$ weakly converge to $\tilde{\bm{H}}$ compactly.
\end{definition}

The denseness definition for hierarchical data structure (nested mixed effects models) is exactly the same as Definition \ref{def:dense}. Define $\mathcal{M}_L^{*\text{gen}}(\mathcal{X})$ as the class of generalized nested mixed effects models expressed in Equation (\ref{eq:gen_joint_nested}), subject to Assumptions \ref{asm:metric} and \ref{asm:continuity}. Also denote $\mathcal{M}_L^{*\text{MMoE}}(\mathcal{X};F_0,\Phi)$ as the class of nested MMoE given by Equation (\ref{eq:moe_joint_nested}). We have the following denseness theorem:

\begin{theorem} \label{thm:denseness_nested}
Suppose that Assumptions \ref{asm:metric} to \ref{asm:dep_nested} hold. Then, $\mathcal{M}_L^{*\text{MMoE}}(\mathcal{X};F_0,\Phi)$ is compactly dense in $\mathcal{M}_L^{*\text{gen}}(\mathcal{X})$.
\end{theorem}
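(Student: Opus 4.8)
The plan is to reduce Theorem \ref{thm:denseness_nested} to Theorem \ref{thm:denseness_cross} by showing that the nested (conditionally-dependent) random-effect structure can be emulated by the independent, crossed MMoE we already know is dense. The key conceptual point is that a logit gating function in Equation (\ref{eq:moe_disn_nested}) depends on $\bm{w}_{\bm{i}}$ only through the linear combination $\bm{\beta}_j^T\bm{w}_{\bm{i}}=\sum_{l=1}^{L}\beta_{jl}w_{\bm{i}_l}$. Hence, for a fixed observation $\bm{i}$, the gating sees the level-$l$ random effects only through their \emph{scalar} aggregate. The target generalized nested model of Equation (\ref{eq:gen_joint_nested}) allows the distribution $G_l(\bm{\theta}_{\bm{i}_l}\mid\bm{\theta}_{\bm{i}_1},\ldots,\bm{\theta}_{\bm{i}_{l-1}})$ of a lower-level factor to depend on its parents. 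The heart of the argument is that this dependence can be re-expressed: by the noise-outsourcing / standardization lemma on the complete separable metric spaces guaranteed by Assumption \ref{asm:metric}, each conditionally-dependent factor $\bm{\theta}_{\bm{i}_l}$ admits a representation $\bm{\theta}_{\bm{i}_l}=\phi_l(\bm{\theta}_{\bm{i}_1},\ldots,\bm{\theta}_{\bm{i}_{l-1}},U_{\bm{i}_l})$ for a measurable $\phi_l$ and independent uniform (equivalently, $\Phi_l$-distributed) innovations $U_{\bm{i}_l}$. This converts the dependence encoded in $G$ into a \emph{deterministic functional dependence} driven by the independent innovations that the nested MMoE is permitted to use.

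\textbf{The main steps}, in order, are as follows. First I would fix an arbitrary target $(\bm{\Theta},H,G)\in\mathcal{M}_L^{*\text{gen}}(\mathcal{X})$ together with the data configuration $\bm{N}\in\mathcal{N}$, the covariates $\{\bm{x}_{\bm{i}}\}$, and any compact $\bar{\mathcal{X}}$. Second, using the innovation representation above, I would rewrite the target joint distribution of Equation (\ref{eq:gen_joint_nested}) as an integral over the product of independent innovation spaces, so that the latent randomness is carried entirely by independent $\Phi_l$-distributed variables $w_{\bm{i}_l}$ exactly as in Equation (\ref{eq:moe_phi_nested}); the price is that the conditional distribution of $\bm{y}_{\bm{i}}$ now depends on the \emph{entire} ancestral tuple $(w_{\bm{i}_1},\ldots,w_{\bm{i}_L})$ through the composed maps $\phi_1,\ldots,\phi_L$. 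Third, I would absorb this composed dependence into an effective continuous target conditional law $\bar{H}(\bm{y}_{\bm{i}}\mid\bm{x}_{\bm{i}},w_{\bm{i}_1},\ldots,w_{\bm{i}_L})$; continuity in the innovations can be arranged (up to an arbitrarily small weak perturbation) because $\Phi_l$ is continuous by Assumption \ref{asm:cont_random_effect_disn} and $H$ is continuous by Assumption \ref{asm:continuity}, so that the problem has been recast as a crossed-type denseness problem over independent effects. Fourth, I would invoke Theorem \ref{thm:denseness_cross}: since the recast model is a generalized mixed-effects model with independent random effects in $\mathcal{M}_L^{\text{gen}}(\mathcal{X})$, it is compactly approximated by a sequence $\tilde{\bm{H}}_1^{(n)}$ of crossed MMoE built from logit gating (Equation (\ref{eq:moe_gate_cross})), the same pre-specified $F_0$, and the same $\Phi$. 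Finally, I would check that each approximating crossed MMoE, when its independent effects are relabelled by the nesting index $\bm{i}_l$, is literally a nested MMoE of the form in Equation (\ref{eq:moe_joint_nested}), and that compact weak convergence is preserved under the finite integration $\int\,d\Phi(\bm{w})$ over the tree, yielding the claimed compact denseness.

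\textbf{The main obstacle} I anticipate is the third step: justifying that the conditional dependence $G_l(\cdot\mid\text{parents})$ can be represented by a \emph{continuous} deterministic transform of independent continuous innovations without destroying the continuity required by Assumption \ref{asm:continuity}. On a general complete separable metric space $\bm{\Theta}_l$ (Assumption \ref{asm:metric}), the noise-outsourcing lemma gives only a measurable, not continuous, transfer map $\phi_l$, whereas the denseness machinery behind Theorem \ref{thm:denseness_cross} is phrased for continuous conditional laws. I would close this gap by a standard two-layer approximation: first approximate $G_l(\cdot\mid\text{parents})$ in the weak topology by a law with a continuous transfer map (e.g.\ by smoothing/mollifying the quantile coupling in the innovation variable, exploiting that $\Phi_l$ is atomless), incurring an $\varepsilon$-error in the Prohorov metric; then apply Theorem \ref{thm:denseness_cross} to the smoothed target and let the smoothing parameter tend to zero jointly with the MMoE index $n$ via a diagonal argument. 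Because the overall joint distribution is a finite product-integral over the finitely many tree nodes determined by $\bm{N}$, these controlled weak perturbations compose additively and the uniformity over $\bar{\mathcal{X}}$ is retained, which completes the reduction.
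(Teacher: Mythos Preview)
Your strategy---noise-outsourcing to replace the conditionally dependent $\bm{\theta}_{\bm{i}_l}$ by deterministic functions of independent innovations, then invoking Theorem~\ref{thm:denseness_cross} as a black box---is a genuinely different route from the paper's. The paper does not reduce to Theorem~\ref{thm:denseness_cross}; it reruns the same four-step construction and modifies only Step~2. The key device there is that the partition of the level-$l$ innovation line is allowed to depend on the \emph{entire ancestral tuple} $\bm{d}_l=(d_1,\ldots,d_l)$: one chooses intervals $\mathcal{W}_{l,\bm{d}_l}$ with $\Phi_l(\mathcal{W}_{l,\bm{d}_l})=G_l(\bm{\Theta}_{l,d_l}\mid\bm{\Theta}_{1,d_1},\ldots,\bm{\Theta}_{l-1,d_{l-1}})$, so that the product of independent $\Phi_l$-masses over the tree reproduces the joint $G$-mass on each cell of $\tilde{\bm{\Theta}}$. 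The logit gating is then engineered to converge to indicators on these parent-dependent intervals by weighting the level-$l$ linear term by $u^{1/l}$, which forces a lexicographic ordering (higher levels decide first). This is Lemma~\ref{lm:xi2}, and it is the only substantive change from the crossed proof.

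Your proposed fix for the continuity obstacle is where the proposal is incomplete. ``Smoothing the quantile coupling'' presupposes that $\bm{\Theta}_l$ carries enough structure---an order, or a linear/convex structure to mollify against---for a quantile map to exist and to be approximable by continuous maps; Assumption~\ref{asm:metric} grants only a Polish space, which may be totally disconnected, so there need be no quantile coupling to smooth. Even when $\bm{\Theta}_l=\mathbb{R}$, your mollification addresses only continuity of $\phi_l$ in the innovation variable $U_{\bm{i}_l}$, not in the parent variables $(\bm{\theta}_{\bm{i}_1},\ldots,\bm{\theta}_{\bm{i}_{l-1}})$; nothing in the hypotheses forces the kernel $G_l(\cdot\mid\text{parents})$ to vary continuously with its parents, so the composed $\bar H$ can fail Assumption~\ref{asm:continuity} for two independent reasons, and Theorem~\ref{thm:denseness_cross} then does not apply. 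A repair is available---discretize each $\bm{\Theta}_l$ first using the continuity of $H$ (exactly the paper's Step~1, which reduces the parents to finitely many values), then noise-outsource the discretized kernel so that the transfer maps become step functions of $u$ alone, and finally interpolate over arbitrarily thin transition zones (convex combinations of CDFs remain CDFs)---but executing this cleanly essentially rebuilds the parent-dependent partition $\mathcal{W}_{l,\bm{d}_l}$ that the paper constructs directly, so the economy over the paper's route is slight.
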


The proof is leveraged to Appendix \ref{sec:apx2}. Theorem \ref{thm:denseness_nested} suggests that the nested MMoE has a potential to approximate any generalized nested mixed effect models arbitrarily accurately, even if the random effects are restricted to be independent under the nested MMoE while the random effects under the generalized nested mixed effect models can be dependent (Assumption \ref{asm:dep_nested}). In contrast, under Theorem \ref{thm:denseness_cross}, the MMoE can only approximate mixed effect models with independent random effects (Assumption \ref{asm:indep}). Therefore, given that the data structure is hierarchical, Theorem \ref{thm:denseness_nested} is a stronger theoretical result than Theorem \ref{thm:denseness_cross}.

\section{Numerical Illustration} \label{sec:numerical}

In this section, we present a numerical example to demonstrate how the proposed MMoE model can approximate any mixed effects model. For illustration purposes, all variables in this section will be one-dimensional, i.e.~$y_i$ for the response, $\theta_1$ for the general random effect, and $w_1$ for the random effect used in MMoE. We assume there are no covariates $\bm{x}_i$ for reasons to be explained later. In the following, we first demonstrate how to construct an MMoE such that the conditional distribution of $y_i$ given $\theta_1$ can be approximated arbitrarily well by that of $y_i$ given $w_1$ for a single observation $y_i$. Then, we discuss how this is related to the denseness property of MMoE in Theorem \ref{thm:denseness_cross}.

We first construct the true model as follows. Consider $\tilde{\bm{\Theta}} = \bm{\Theta}_1 = [-2, 2]$ as the space for $\theta_1$. The general random effect $\theta_1$ is assumed to follow $\textrm{Uniform}(-2, 2)$ with distribution function $G_1(\theta_1) = (2+\theta_1)/4$ and density $g_1(\theta_1) = 1/4$. We assume $y_i | \theta_1 \sim \textrm{Normal}(\theta_1, 1)$ with density $h(y_i|\theta_1) = \frac{1}{\sqrt{2\pi}}e^{-\frac{1}{2}(y_i-\theta_1)^2}$. 

We then aim to use MMoE to approximate the true model mentioned above. For MMoE, we choose $\mathcal{W}_{1} = \mathbb{R}$ as the space for $w_1$. The random effect $w_1$ in MMoE is assumed to be standard normal with distribution function $\Phi_1(\cdot)$ and density $\phi_1(\cdot)$. Denote $f(y_i|w_1)$ as the conditional density of $y_i$ given $w_1$. We aim to construct $f(y_i|w_1)$ such that the conditional distribution $h(y_i|\theta_1)$, $\forall \theta_1 \in \bm{\Theta}_1$, can be approximated arbitrarily well by $f(y_i|w_1)$, $\exists w_1 \in \mathcal{W}_1$.

We start by discretizing both $\bm{\Theta}_1$ and $\mathcal{W}_1$ into $D_1$ pairs of subspaces $\{\bm{\Theta}_{1, d_1}\}_{d_1=1,\ldots,D_1}$ and $\{ \mathcal{W}_{1, d_1}\}_{d_1=1,\ldots,D_1}$ such that $G_1(\bm{\Theta}_{1, d_1}) = \Phi_1(\mathcal{W}_{1, d_1})$ for $d_1=1, 2, \dots, D_1$. For each $\bm{\Theta}_{1, d_1}$, we pick $\theta^{*}_{1, d_1} \in \bm{\Theta}_{1, d_1}$ such that $h(y_1|\theta^{*}_{1, d_1})$ is a reasonable approximation of $h(y_1|\theta_1 \in \bm{\Theta}_{1, d_1})$. Then, each $h(y_1|\theta^{*}_{1, d_1})$ is approximated by the following finite mixture
\begin{equation} \label{eq:numerical-approx-with-moe}
    f(y_i|w_1) = \sum_{d_1=1}^{D_1} \xi^{(u)}_{d_1}(w_1) h(y_1|\theta^{*}_{1, d_1})
\end{equation}
where the mixing weights are given by
\begin{equation} \label{eq:numerical-approx-with-moe-mixing-weights}
    \xi^{(u)}_{d_1}(w_1) = \frac{\exp\{ u(\tilde{\beta}_{d_1, 0} + \tilde{\beta}_{d_1, 1}w_1) \}}{\sum_{d'_1=1}^{D_1}\exp\{ u(\tilde{\beta}_{d'_1, 0} + \tilde{\beta}_{d'_1, 1}w_1) \}}, \quad d_1 = 1, 2, \dots, D_1.
\end{equation}

Note Equation (\ref{eq:numerical-approx-with-moe}) is not yet the formulation of MMoE, since each $h(y_i|\theta^{*}_{1, d_1})$ is still dependent on $\theta^{*}_{1, d_1}$. Following the same idea in \cite{fung2019moe1}, Appendix \ref{sec:apx1} shows that $h(y_i|\theta^{*}_{1, d_1})$ can be further approximated by a finite mixture of expert functions with fixed parameters independent of the random effects, which ultimately leads to an MMoE similar to Equation (\ref{eq:moe_disn_cross}). We will omit such approximation procedures and work with Equation (\ref{eq:numerical-approx-with-moe}) to avoid further complications which are not relevant to random effects. Meanwhile, the approximation procedures in the presence of covariates $\bm{x}_i$ are also similar to those presented in \cite{fung2019moe1} and are therefore deferred to Appendix \ref{sec:apx1}, which is why we have presented an example without covariates for conciseness.

Apart from $h(y_i|\theta^{*}_{1, d_1})$, the mixing weights $\xi^{(u)}_{d_1}(w_1)$ in Equation (\ref{eq:numerical-approx-with-moe-mixing-weights}) contains an additional control parameter $u$ compared with MMoE. When the coefficients $\{(\tilde{\beta}_{d_1, 0}, \tilde{\beta}_{d_1, 1})\}_{d_1=1, 2, \dots, D_1}$ are carefully constructed, $\xi^{(u)}_{d_1}(w_1) \rightarrow 1\{w_1 \in \mathcal{W}_{1, d_1}\}$ as $u \rightarrow \infty$ for $d_1=1, 2, \dots, D_1$. Consequently, the finite mixture in Equation (\ref{eq:numerical-approx-with-moe}) degenerates to $h(y_i|\theta^{*}_{1, d_1})$ whenever $w_1 \in \mathcal{W}_{1, d_1}$ and $u \rightarrow \infty$. 

To summarize, given $\bm{\Theta}_{1, d_1}$, we first approximate $h(y_i|\theta_1 \in \bm{\Theta}_{1, d_1})$ by $h(y_i|\theta^{*}_{1, d_1})$ for some $\theta^{*}_{1, d_1} \in \bm{\Theta}_{1, d_1}$. Then, $h(y_i|\theta^{*}_{1, d_1})$ is further approximated by $f(y_i|w_1 \in \mathcal{W}_{1, d_1})$, which is done for all pairs of $(\bm{\Theta}_{1, d_1}, \mathcal{W}_{1, d_1})$. In effect, the subspace $\mathcal{W}_{1, d_1}$ of random effects $w_1$ in MMoE becomes specialised in approximating the subspace $\bm{\Theta}_{1, d_1}$ of some general random effects $\theta_1$. As the partition of $(\bm{\Theta}_{1, d_1}, \mathcal{W}_{1, d_1})$ becomes finer and finer, we can find a mapping $\bm{\Theta}_1\rightarrow\mathcal{W}_1$ such that the conditional distribution $h(y_i|\theta_1)$ for any $\theta_1 \in \bm{\Theta}_1$ can be approximated by $f(y_i|w_1)$ for some $w_1 \in \mathcal{W}_1$.

As a concrete example, Table \ref{table:numerical-demonstration} illustrates the above procedure with $D_1=5$ partitions for $\bm{\Theta}_1$ and $\mathcal{W}_{1}$, where we choose the midpoint $\theta^{*}_{1, d_1}$ as the representative value for the subspace $\bm{\Theta}_{1, d_1}$. The values of $\{(\tilde{\beta}_{d_1, 0}, \tilde{\beta}_{d_1, 1})\}_{d_1=1, 2, \dots, D_1}$ are chosen according to Lemma 3.1 in \cite{fung2019moe1} to ensure the convergence of mixing weights $\xi^{(u)}_{d_1}(w_1)$ to the degenerate indicators $1\{w_1 \in \mathcal{W}_{1, d_1}\}$. This example is illustrated by the second row in Figure \ref{fig:numerical-example}, where the each vertical slice of the plot shows the conditional distribution of $f(y_i|w_1)$ (blue) for $u=1, 10, 100$ and $1000$ from left to right, and $h(y_i|\theta_1 \in \bm{\Theta}_{1, d_1})$ (green) for $d_1 = 1, 2, \dots, 5$. Indeed, we observe each $f(y_i|w_1 \in \mathcal{W}_{1, d_1})$ approximates the corresponding $h(y_i|\theta_1 \in \bm{\Theta}_{1, d_1})$ better as $u$ increases. The third and fourth row of Figure \ref{fig:numerical-example} show the same sets of plots, but for $D_1 = 10$ and $D_1 = 100$ partitions for both $\bm{\Theta}_{1}$ and $\mathcal{W}_1$. However, if the number of partition is too small, the MMoE will underestimate the variance of $y_i$ given $w_1$ compared with the variance of $y_i$ given $\theta_1$ under the true model, even if $u\rightarrow\infty$, as shown in the first row of Figure \ref{fig:numerical-example} with $D_1=2$. In the limiting case where the partition becomes infinitely fine, we can find a unique $f(y_i|w_1)$ to approximate $h(y_i|\theta_1)$ for all $\theta_1 \in \bm{\Theta}_1$. For example, the conditional distribution $h(y_i|\theta_1 = -1)$ in the original mixed effects model will be eventually be approximated by $f(y_i|w_1 = -0.67)$ in MMoE.

Finally, once the conditional distribution $h(y_i|\theta_1)$ can be approximated by $f(y_i|w_1)$, the denseness property in Theorem \ref{thm:denseness_cross} naturally follows. More specifically, given the conditional independence assumption in Equation (\ref{eq:gen_y_conditional}), the conditional joint distribution of $\prod_{i=1}^{N} h(y_i|\theta_i)$ can also be approximated arbitrarily well by $\prod_{i=1}^{N} f(y_i|w_i)$, where $\theta_i$ and $w_i$ for each observation $y_i$ are specified by $\theta_1$, $w_1$ and the mapping function $c_1(i)$. In the case of finite partition of subspaces $\{\bm{\Theta}_{1, d_1}\}_{d_1=1,\ldots,D_1}$ and $\{ \mathcal{W}_{1, d_1}\}_{d_1=1,\ldots,D_1}$ such that $G_1(\bm{\Theta}_{1, d_1}) = \Phi_1(\mathcal{W}_{1, d_1})$ for $d_1=1, 2, \dots, D_1$, the marginal joint distribution of all $y_i$'s are also approximated arbitrarily well, which is evident from
\begin{equation}
    f(\bm{Y}) = \sum_{d_1=1}^{D_1} \left[\prod_{i=1}^{N} h(y_i|\theta_i) 1\{\theta_i \in \bm{\Theta}_{1, d_1} \}\right] G_1(\bm{\Theta}_{1, d_1})
\end{equation}
and
\begin{equation}
    g(\bm{Y}) = \sum_{d_1=1}^{D_1} \left[\prod_{i=1}^{N} f(y_i|w_i) 1\{w_i \in \mathcal{W}_{1, d_1} \} \right] \Phi_1(\mathcal{W}_{1, d_1}).
\end{equation}
In the limiting scenario where the partition of $\bm{\Theta}_1$ and $\mathcal{W}_1$ becomes infinitely fine, the above equations recover the integral forms in Equation (\ref{eq:gen_joint_cross2}) and (\ref{eq:moe_joint_cross}), thus demonstrating the denseness property of MMoE in Theorem \ref{thm:denseness_cross}.

\afterpage{
    
\renewcommand{\arraystretch}{1.5}

\begin{table}[!ht]
\centering
\caption{Example of approximation with five partitions of $\bm{\Theta}_{1}$ and $\mathcal{W}_{1}$.}
\label{table:numerical-demonstration}

\begin{tabular*}{\textwidth}{c@{\extracolsep{\fill}}c@{\extracolsep{\fill}}c@{\extracolsep{\fill}}c@{\extracolsep{\fill}}c@{\extracolsep{\fill}}c@{\extracolsep{\fill}}}
    \hline \hline
$d_1$        & 1               & 2                & 3               & 4              & 5            \\ \hline
$\bm{\Theta}_{1,d_1}$       & $[-2.00, -1.20]$  & $(-1.20, -0.40]$   & $(-0.40, 0.40]$   & $(0.40, 1.20]$   & $(1.20, 2.00]$   \\
$\mathcal{W}_{1,d_1}$           & $(-\infty, -0.84]$ & $(-0.84, -0.25]$ & $(-0.25, 0.25]$ & $(0.25, 0.84]$ & $(0.84, +\infty)$ \\
$\theta^{*}_{1, d_1}$ & $-1.60$            & $-0.80$             & $0.00$               & $0.80$            & $1.60$          \\
$\tilde{\beta}_{d_1, 0}$     & $0.00$               & $0.23$           & $0.29$          & $0.23$         & $0.00$            \\
$\tilde{\beta}_{d_1, 1}$     & $0.00$               & $0.25$             & $0.50$             & $0.75$           & $1.00$      \\ \hline \hline
\end{tabular*}

\vspace{1em}

\end{table}

\renewcommand{\arraystretch}{1.0}

\begin{figure}[!h]
    \centering
    \caption{Illustration of using $f(y_i|w_1)$ (blue) to approximate $h(y_i|\theta_1 \in \bm{\Theta}_{1, d_1})$ (green) for different numbers of partitions of $\bm{\Theta}_{1}$ and $\mathcal{W}_{1}$. Each vertical slice corresponds to the conditional density of $f(y_i|w_1)$ or $h(y_i|\theta_1 \in \bm{\Theta}_{1, d_1})$. From left to right, the control parameter $u$ ranges from 1, 10, 100 to 1000 in the blue plots, and eventually $f(y_i|w_1) = h(y_i|\theta_1 \in \bm{\Theta}_{1, d_1})$ for all $w_1 \in \mathcal{W}_{1, d_1}$ as $u \rightarrow \infty$. From top to bottom, the number of partition $D_1$ ranges from 2, 5, 10 to 100. When the partition becomes infinitely fine and $u \rightarrow \infty$, each $h(y_i|\theta_1)$ is approximated arbitrarily well by some $f(y_i|w_1)$, e.g.~$h(y_i|\theta_1 = 1) = f(y_i|w_1 = 0.67)$.}
    \label{fig:numerical-example}
    
    \begin{subfigure}{0.18\textwidth}
        \includegraphics[width=\textwidth]{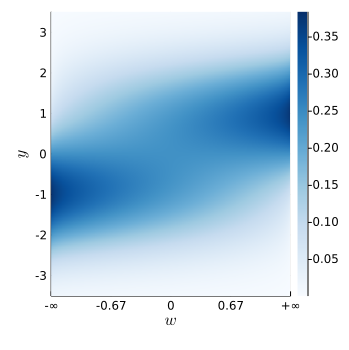}
    \end{subfigure}
    ~
    \begin{subfigure}{0.18\textwidth}
        \includegraphics[width=\textwidth]{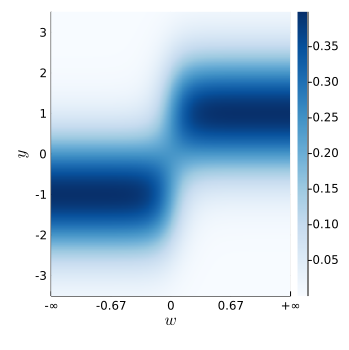}
    \end{subfigure}
    ~
    \begin{subfigure}{0.18\textwidth}
        \includegraphics[width=\textwidth]{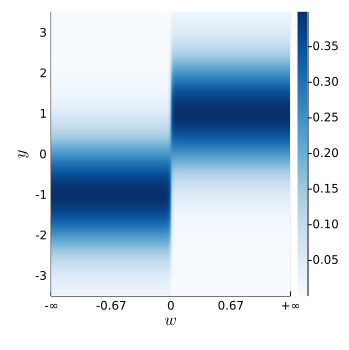}
    \end{subfigure}
    ~
    \begin{subfigure}{0.18\textwidth}
        \includegraphics[width=\textwidth]{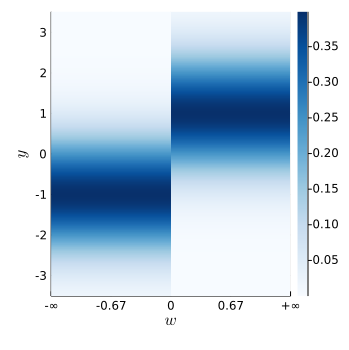}
    \end{subfigure}
    ~
    \begin{subfigure}{0.18\textwidth}
        \includegraphics[width=\textwidth]{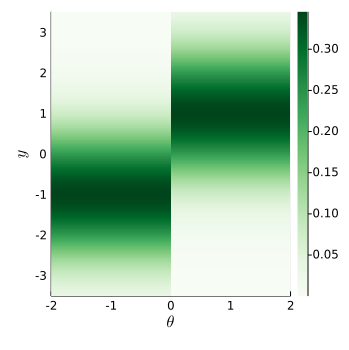}
    \end{subfigure}
    
    \begin{subfigure}{0.18\textwidth}
        \includegraphics[width=\textwidth]{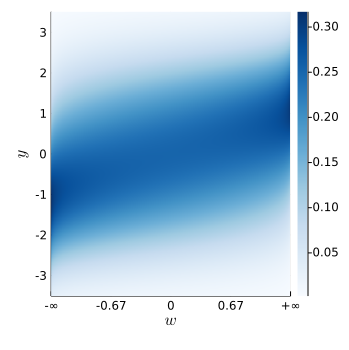}
    \end{subfigure}
    ~
    \begin{subfigure}{0.18\textwidth}
        \includegraphics[width=\textwidth]{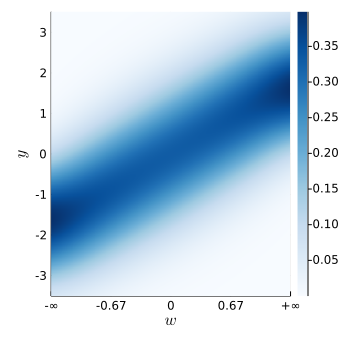}
    \end{subfigure}
    ~
    \begin{subfigure}{0.18\textwidth}
        \includegraphics[width=\textwidth]{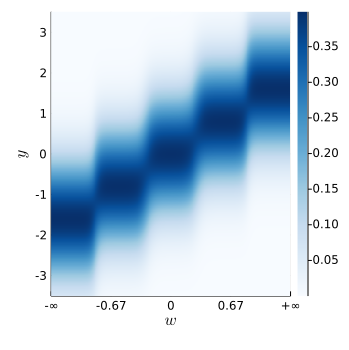}
    \end{subfigure}
    ~
    \begin{subfigure}{0.18\textwidth}
        \includegraphics[width=\textwidth]{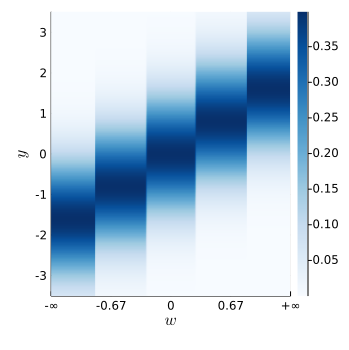}
    \end{subfigure}
    ~
    \begin{subfigure}{0.18\textwidth}
        \includegraphics[width=\textwidth]{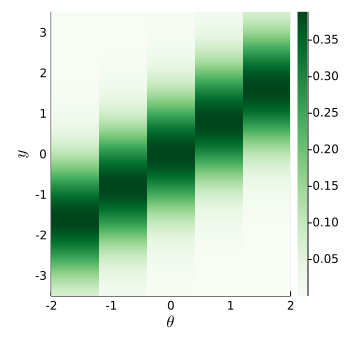}
    \end{subfigure}
    
    \begin{subfigure}{0.18\textwidth}
        \includegraphics[width=\textwidth]{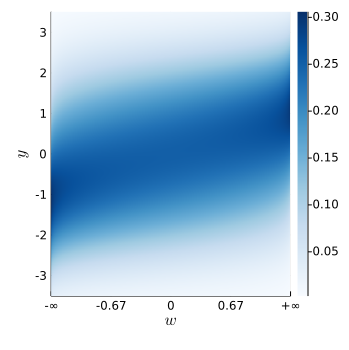}
    \end{subfigure}
    ~
    \begin{subfigure}{0.18\textwidth}
        \includegraphics[width=\textwidth]{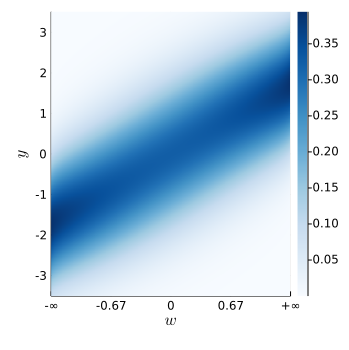}
    \end{subfigure}
    ~
    \begin{subfigure}{0.18\textwidth}
        \includegraphics[width=\textwidth]{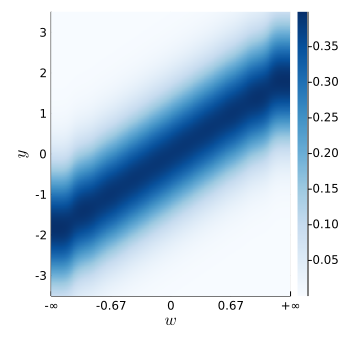}
    \end{subfigure}
    ~
    \begin{subfigure}{0.18\textwidth}
        \includegraphics[width=\textwidth]{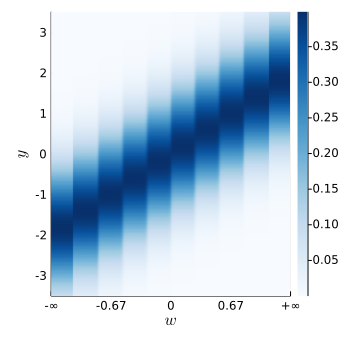}
    \end{subfigure}
    ~
    \begin{subfigure}{0.18\textwidth}
        \includegraphics[width=\textwidth]{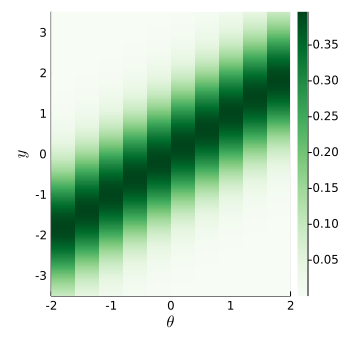}
    \end{subfigure}

    \begin{subfigure}{0.18\textwidth}
        \includegraphics[width=\textwidth]{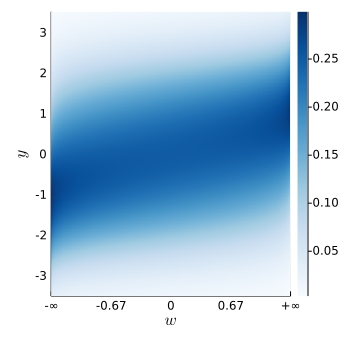}
    \end{subfigure}
    ~
    \begin{subfigure}{0.18\textwidth}
        \includegraphics[width=\textwidth]{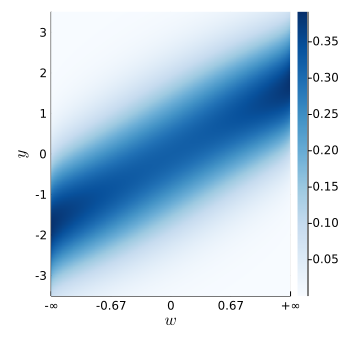}
    \end{subfigure}
    ~
    \begin{subfigure}{0.18\textwidth}
        \includegraphics[width=\textwidth]{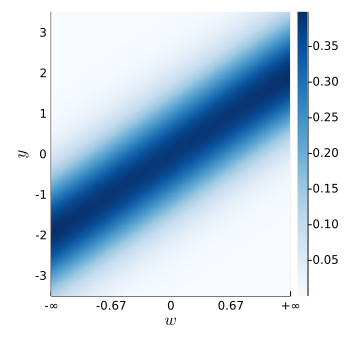}
    \end{subfigure}
    ~
    \begin{subfigure}{0.18\textwidth}
        \includegraphics[width=\textwidth]{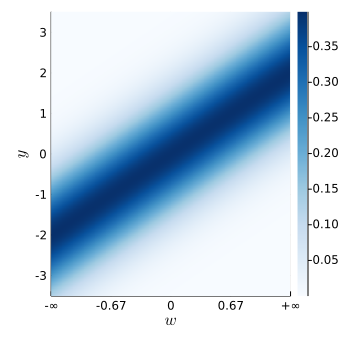}
    \end{subfigure}
    ~
    \begin{subfigure}{0.18\textwidth}
        \includegraphics[width=\textwidth]{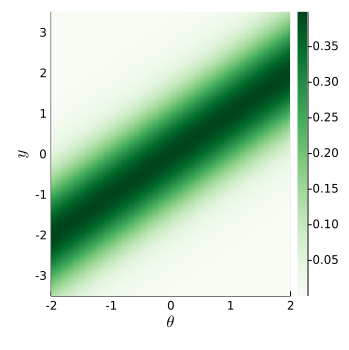}
    \end{subfigure}
    
\end{figure}

\pagebreak
}

\section{Discussions} \label{sec:discussions}
In this paper, we introduce a class of the mixed mixture of experts models (MMoE) for multilevel regression data. We prove that the MMoE is dense in the space of generalized mixed effects models, which is a rich class containing almost all models in the literature having independent random effects, in the sense of weak convergence. We further study a special case where the data is hierarchical in structure. In this case, the proposed nested MMoE is shown to be dense in the space of generalized nested mixed effects models where the random effects can possibly be dependent. The two denseness theorems justify the versatility of the MMoE in catering for a broad range of multilevel data characteristics, including the marginal distribution, dependence, regression link, random intercept and random slope.


This paper aims to prove the most general results imposing minimal assumptions. The only practical restriction is that the expert function $F_0(\bm{y}_i;\bm{\psi}_j)$ in Equation (\ref{eq:moe_disn_cross}) needs to approximate any degenerate distributions (Assumption \ref{asm:denseness_condition}). This assumption is much weaker than those applied to the existing approximation theorems (see, e.g, \cite{norets2014posterior} and \cite{nguyen2019approximation}), which require that the expert density function is a scalable symmetric function (Equation (3.1) of \cite{norets2014posterior}), and that the target density function does not change abruptly w.r.t. $\bm{y}_i$ and $\bm{x}_i$ (Equation (3.2) of \cite{norets2014posterior}). On the other hand, there are several limitations of the denseness theorems formulated in this paper. Firstly, weak convergence does not guarantee the approximation capability in terms of moments (e.g., the mean function studied by \cite{nguyen2016universal}) or some distance metrics (e.g., the KL divergence studied by \cite{jiang1999approximation}, \cite{norets2010approximation} and \cite{nguyen2019approximation}). To establish the denseness theorems w.r.t. moments and other distance metrics, one needs to further assume that the moments of the MMoE expert functions and the target distributions are finite, and that the conditions indicated by Equations (3.1) and (3.2) of \cite{norets2014posterior} are fulfilled. Secondly, as described in Section 3.4 of \cite{fung2019moe1}, the denseness theorems do not provide a convergence rate, so there is no control on the mixture components $g$ to approximate any generalized mixed effects distributions at a desired level of accuracy. To establish the rate results, one needs to impose further assumptions on the target distribution $\tilde{H}(\bm{y}|\bm{x})$ in Equation (\ref{eq:gen_joint_cross2}) and the MMoE distribution $\tilde{F}(\bm{y};\bm{x})$ in Equation (\ref{eq:moe_joint_cross}). We leave these technical establishments as a future research direction.

\section{Acknowledgements}
This research did not receive any specific grant from funding agencies in the public, commercial, or not-for-profit sectors.

\bibliographystyle{abbrvnat}
\bibliography{references} 

\begin{appendices}
\section{Proof of Theorem \ref{thm:denseness_cross}} \label{sec:apx1}
We first introduce the following functions, where the detailed notations would be explained later in the proof.
\begin{equation} \label{eq:apx1:1}
\tilde{H}(\bm{y}|\bm{x})=\int_{\tilde{\bm{\Theta}}}\left[\prod_{i=1}^{N}H(\bm{y}_i|\bm{x}_i,\bm{\theta}_i)\right]dG(\tilde{\bm{\theta}}),
\end{equation}
\begin{equation} \label{eq:apx1:2}
\tilde{H}^{*}(\bm{y}|\bm{x})=\sum_{\tilde{\bm{d}}\in\tilde{\mathcal{D}}}\prod_{i=1}^{N}H(\bm{y}_i|\bm{x}_i,\bm{\theta}^{*}_{\bm{d}^{(c(i))}})G(\tilde{\bm{\Theta}}_{\tilde{\bm{d}}}),
\end{equation}
\begin{equation} \label{eq:apx1:3}
\tilde{F}^{**(u)}(\bm{y}|\bm{x})=\int\prod_{i=1}^{N}F^{**(u)}(\bm{y}_i|\bm{x}_i,\bm{w}_i)d\Phi (\bm{w})
\end{equation}
with $F^{**(u)}(\bm{y}_i|\bm{x}_i,\bm{w}_i)=\sum_{\bm{d}\in\mathcal{D}^{+}}\xi_{\bm{d}}^{(u)}(\bm{w}_i)H(\bm{y}_i|\bm{x}_i,\bm{\theta}^{*}_{\bm{d}})$,
\begin{equation} \label{eq:apx1:4}
\tilde{F}^{*(M,Q,t,u)}(\bm{y}|\bm{x})=\int\prod_{i=1}^{N}F^{*(M,Q,t,u)}(\bm{y}_i|\bm{x}_i,\bm{w}_i)d\Phi (\bm{w})
\end{equation}
with $F^{*(M,Q,t,u)}(\bm{y}_i|\bm{x}_i,\bm{w}_i)=\sum_{\bm{m}\in\mathcal{M}}\sum_{\bm{q}\in\mathcal{Q}}\sum_{\bm{d}\in\mathcal{D}^{+}}\pi_j^{(t,u)}(\bm{x}_i,\bm{w}_i;\tilde{\bm{\alpha}}_{\bm{q}}^Q,\tilde{\bm{\beta}}_{\bm{d}})1\{\bm{y}_i\geq\lfloor\bm{y}\rfloor_{\bm{m}}^{M}\}$,
\begin{equation} \label{eq:apx1:5}
\tilde{F}^{(M,Q,t,u,v)}(\bm{y}|\bm{x})=\int\prod_{i=1}^{N}F^{(M,Q,t,u,v)}(\bm{y}_i|\bm{x}_i,\bm{w}_i)d\Phi (\bm{w})
\end{equation}
with $F^{(M,Q,t,u,v)}(\bm{y}_i|\bm{x}_i,\bm{w}_i)=\sum_{\bm{m}\in\mathcal{M}}\sum_{\bm{q}\in\mathcal{Q}}\sum_{\bm{d}\in\mathcal{D}^{+}}\pi_j^{(t,u)}(\bm{x}_i,\bm{w}_i;\tilde{\bm{\alpha}}_{\bm{q}}^Q,\tilde{\bm{\beta}}_{\bm{d}})F_0(\bm{y}_i;\bm{\psi}_{\bm{m}}^{M(v)})$. Note that Equation (\ref{eq:apx1:1}) is in the form of the generalized mixed effects models under Equation (\ref{eq:gen_joint_cross2}), while Equation (\ref{eq:apx1:5}) is in the MMoE framework under Equation (\ref{eq:moe_joint_cross}). The main proof idea is to bound the approximation errors between any two consecutive functions from Equations (\ref{eq:apx1:1}) to (\ref{eq:apx1:5}):

\subsection{Step 1: Approximating Equation (\ref{eq:apx1:1}) by Equation (\ref{eq:apx1:2})} \label{sec:apx1:step1}

As the metric space $(\bm{\Theta}_l,d_{\bm{\Theta}_l})$ is complete separable (Assumption \ref{asm:metric}), the probability measure $G_l$ on $\bm{\theta}_l^{(s)}$ is tight, i.e. $\forall \epsilon_1>0$, $\exists \bar{\bm{\Theta}}_l\subseteq \bm{\Theta}_l$ compact such that $G_l(\bar{\bm{\Theta}}_l):=\mathbb{P}(\bm{\theta}_l^{(s)}\in\bar{\bm{\Theta}}_l)\geq 1-\epsilon_1$. Since $\bar{\bm{\Theta}}_l$ is compact, for any $\delta_1>0$ we can find subspaces $\{\bm{\Theta}_{l,d_l}\}_{d_l=1,\ldots,D_l}$ ($d_l$ is called the subspace index) and points $\{\bm{\theta}^{*}_{l,d_l}\}_{d_l=1,\ldots,D_l}$ such that $\cup_{d_l=1,\ldots,D_l}\bm{\Theta}_{l,d_l}=\bar{\bm{\Theta}}_{l}$, $\bm{\theta}^{*}_{l,d_l}\in\bm{\Theta}_{l,d_l}$ for every $d_l=1,\ldots,D_l$ and $\bm{\Theta}_{l,d_l}$ is covered by the ball with radius $\delta_1/L$ centered at $\bm{\theta}^{*}_{l,d_l}$. Due to the uniform continuity of $H$ w.r.t. $\bm{\theta}_i$ on $\bar{\bm{\Theta}}_l$ (Assumption \ref{asm:continuity} implies uniform convergence in compact space), for any $\epsilon_2>0$ we can choose sufficiently small $\delta_1$ with the aforementioned subspace partitioning such that $|H(\bm{y}_i|\bm{x}_i,\bm{\theta}_i)-H(\bm{y}_i|\bm{x}_i,\bm{\theta}^{*}_{\bm{d}^{(c(i))}})|\leq\epsilon_2$ if $\bm{\theta}_{il}\in\bm{\Theta}_{l,d_l^{(c(i))}}$ for every $l=1,\ldots,L$, where $\bm{\theta}^{*}_{\bm{d}^{c(i)}}=\{\bm{\theta}^{*}_{1,d_l^{(c(i))}}\}_{l=1,\ldots,L}$ and $\bm{d}^{(c(i))}=\{d_l^{(c(i))}\}_{l=1,\ldots,L}$ with $d_l^{(c(i))}\in\{1,\ldots,D_l\}$. Note here the superscript $(c(i))$ of $d_l^{(c(i))}$ represents the subspace index $d_l$ corresponding to the $i^{\text{th}}$ observation.


Define $\tilde{H}^{*}(\bm{y}|\bm{x})$ in the form of Equation (\ref{eq:apx1:2}), where $\tilde{\mathcal{D}}=\prod_{l=1}^{L}\prod_{s=1}^{S_l}\mathcal{D}_l^{(s)}$ with $\mathcal{D}_l^{(s)}=\{1,\ldots,D_l\}$, $\tilde{\bm{d}}=\{d_l^{(s)}\}_{l=1,\ldots,L;s=1,\ldots,S_l}$ and $\tilde{\bm{\Theta}}_{\tilde{\bm{d}}}=\prod_{l=1}^{L}\prod_{s=1}^{S_l}\bm{\Theta}_{l,d_l^{(s)}}$. We first introduce the following technical lemma which can be easily proved by induction:

\begin{lemma} \label{lm:product}
Given $0\leq a_i,b_i\leq 1$ and $|a_i-b_i|\leq\epsilon$ for all $i=1,\ldots,N$, then $|\prod_{i=1}^{N}a_i-\prod_{i=1}^{N}b_i|\leq N\epsilon$.
\end{lemma}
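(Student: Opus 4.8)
The plan is to prove Lemma \ref{lm:product} by induction on the number of factors $N$, using a single telescoping decomposition together with the boundedness of all partial products. The crucial observation, which I would establish first, is that because each factor satisfies $0\le a_i\le 1$ and $0\le b_i\le 1$, every partial product also lies in $[0,1]$; this is what keeps the accumulated error from blowing up and makes the linear-in-$N$ bound possible.

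\textbf{Base case.} For $N=1$ the claim is immediate: $|a_1-b_1|\le\epsilon=1\cdot\epsilon$ by hypothesis.

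\textbf{Inductive step.} Suppose the bound holds for some $N$, and write $A=\prod_{i=1}^{N}a_i$ and $B=\prod_{i=1}^{N}b_i$, so that $0\le A,B\le 1$ and, by the inductive hypothesis, $|A-B|\le N\epsilon$. The key step is the telescoping identity
\begin{equation}
\prod_{i=1}^{N+1}a_i-\prod_{i=1}^{N+1}b_i
= A\,a_{N+1}-B\,b_{N+1}
= A\,(a_{N+1}-b_{N+1}) + b_{N+1}\,(A-B).
\end{equation}
Applying the triangle inequality and then bounding each factor by its range, I would estimate
\begin{equation}
\left|\prod_{i=1}^{N+1}a_i-\prod_{i=1}^{N+1}b_i\right|
\le |A|\,|a_{N+1}-b_{N+1}| + |b_{N+1}|\,|A-B|
\le 1\cdot\epsilon + 1\cdot N\epsilon
= (N+1)\epsilon,
\end{equation}
where I use $0\le A\le 1$, $0\le b_{N+1}\le 1$, the hypothesis $|a_{N+1}-b_{N+1}|\le\epsilon$, and the inductive hypothesis $|A-B|\le N\epsilon$. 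This closes the induction and proves the lemma.

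\textbf{Main obstacle.} There is no substantive difficulty here; the only point requiring care is to add and subtract the correct cross term ($A\,b_{N+1}$) so that the two resulting pieces can each be controlled separately, and to remember to invoke the $[0,1]$ bounds on the partial product $A$ and on the single factor $b_{N+1}$ rather than naively bounding differences. I would state these range facts explicitly before the inequality chain so the constant $1$ in front of $\epsilon$ and of $N\epsilon$ is fully justified.
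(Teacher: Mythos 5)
Your proof is correct and matches the paper's approach: the paper simply remarks that Lemma \ref{lm:product} ``can be easily proved by induction,'' and your telescoping decomposition $A\,a_{N+1}-B\,b_{N+1}=A(a_{N+1}-b_{N+1})+b_{N+1}(A-B)$, combined with the $[0,1]$ bounds on the partial products, is exactly the standard induction the authors have in mind. No gaps; the argument is complete and correctly justifies the constant in front of each $\epsilon$ term.
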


Since $\mathbb{P}(\tilde{\bm{\theta}}\notin\cup_{\tilde{\bm{d}}\in\tilde{\bm{D}}}\tilde{\bm{\Theta}}_{\tilde{\bm{d}}})\leq\sum_{l=1,\ldots,L;s=1,\ldots,S_l}\mathbb{P}(\bm{\theta}_l^{(s)}\notin\bar{\bm{\Theta}}_l)\leq NL\epsilon_1$, we can now rewrite $\tilde{H}(\bm{y}|\bm{x})$ as
\begin{equation} \label{eq:apx1:step1:rewrite1}
\tilde{H}(\bm{y}|\bm{x})=\sum_{\tilde{\bm{d}}\in\tilde{\mathcal{D}}}\int_{\tilde{\bm{\Theta}}_{\tilde{\bm{d}}}}\prod_{i=1}^{N}H(\bm{y}_i|\bm{x}_i,\bm{\theta}_i)dG(\tilde{\bm{\theta}})+NL\mathcal{O}_1(\epsilon_1),
\end{equation}
and $\tilde{H}^{*}(\bm{y}|\bm{x})$ as
\begin{equation} \label{eq:apx1:step1:rewrite2}
\tilde{H}^{*}(\bm{y}|\bm{x})=\sum_{\tilde{\bm{d}}\in\tilde{\mathcal{D}}}\int_{\tilde{\bm{\Theta}}_{\tilde{\bm{d}}}}\prod_{i=1}^{N}H(\bm{y}_i|\bm{x}_i,\bm{\theta}^{*}_{\bm{d}^{(c(i))}})dG(\tilde{\bm{\theta}}),
\end{equation}
where $0\leq\mathcal{O}_1(\epsilon_1)\leq\epsilon_1$, and we have the following approximation error bound between $\tilde{H}(\bm{y}|\bm{x})$ and $\tilde{H}^{*}(\bm{y}|\bm{x})$:
\begin{align} \label{eq:apx1:step1:bound_final}
|\tilde{H}^{*}(\bm{y}|\bm{x})-\tilde{H}(\bm{y}|\bm{x})|
&\leq\sum_{\tilde{\bm{d}}\in\tilde{\mathcal{D}}}\int_{\tilde{\bm{\Theta}}_{\tilde{\bm{d}}}}
\left|\prod_{i=1}^{N}H(\bm{y}_i|\bm{x}_i,\bm{\theta}^{*}_{\bm{d}^{(c(i))}})-\prod_{i=1}^{N}H(\bm{y}_i|\bm{x}_i,\bm{\theta}_i)\right|
dG(\tilde{\bm{\theta}})+NL\epsilon_1 \nonumber\\
&\leq N\epsilon_2+NL\epsilon_1,
\end{align}
where the second inequality is resulted from Lemma \ref{lm:product}.

\subsection{Step 2: Approximating Equation (\ref{eq:apx1:2}) by Equation (\ref{eq:apx1:3})} \label{sec:apx1:step2}

Partition the space of $w_{il}$ (i.e. $\mathbb{R}$) into $D_l+2$ adjacent half open half closed intervals $\mathcal{W}_{l,d_l}=(w^{*}_{l,d_l-1},w^{*}_{l,d_l}]$ (for $d_l=1,\ldots,D_l$), $\mathcal{W}_{l,0}=(-\infty,w^{*}_{l,0}]$ and $\mathcal{W}_{l,D_l+1}=(w^{*}_{l,D_l},\infty)$ such that $\Phi_l(\mathcal{W}_{l,d_l})=G_l(\bm{\Theta}_{l,d_l})$ for every $d_l=1,\ldots,D_l$. Note that such a partitioning always exists as $\Phi_l$ is a continuous distribution. Also denote the domain of (the $L$-vector) $\bm{d}=(d_1,\ldots,d_L)$ as $\mathcal{D}=\prod_{l=1}^{L}\{1,\ldots,D_l\}$ and the corresponding extended domain $\mathcal{D}^{+}=\prod_{l=1}^{L}\{0,1,\ldots,D_l+1\}$.


Denote $\tilde{F}^{**(u)}(\bm{y}|\bm{x})$ as the form of Equation (\ref{eq:apx1:3}) with $\xi^{(u)}_{\bm{d}}(\bm{w}_i)$ given by
\begin{equation} \label{eq:apx1:step2:xi}
\xi^{(u)}_{\bm{d}}(\bm{w}_i)=\exp\{u(\tilde{\beta}_{\bm{d},0}+\tilde{\bm{\beta}}_{\bm{d}}^T\bm{w}_i)\}/\sum_{\bm{d}'\in\mathcal{D}^{+}}\exp\{u(\tilde{\beta}_{\bm{d}',0}+\tilde{\bm{\beta}}_{\bm{d}'}^T\bm{w}_i)\}.
\end{equation}

We have the following technical lemma to help us choose suitable parameters $\{(\tilde{\beta}_{\bm{d},0},\tilde{\bm{\beta}}_{\bm{d}})\}_{{\bm{d}}\in\mathcal{D}^{+}}$ of $\xi^{(u)}_{\bm{d}}(\bm{w}_i)$:

\begin{lemma} \label{lm:xi}
There exists parameters $\{(\tilde{\beta}_{\bm{d},0},\tilde{\bm{\beta}}_{\bm{d}})\}_{{\bm{d}}\in\mathcal{D}^{+}}$ of $\xi^{(u)}_{\bm{d}}(\bm{w}_i)$ such that $\xi^{(u)}_{\bm{d}}(\bm{w}_i)\xrightarrow{u\rightarrow\infty}\prod_{l=1}^{L}1^{*}_{w_{il}}(\mathcal{W}_{l,d_l})$ for every $\bm{d}\in\mathcal{D}^{+}$, where $1^{*}_w(\mathcal{W})$ is an indicator which equals to 1 if $w$ falls inside the interior of $\mathcal{W}$, equals to a number between 0 and 1 if $W$ is on the boundary of $\mathcal{W}$, and equals to 0 otherwise.
\end{lemma}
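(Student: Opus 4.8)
The plan is to exploit the fact that the scaled softmax $\xi^{(u)}_{\bm{d}}(\bm{w}_i)$ in Equation (\ref{eq:apx1:step2:xi}) converges, as $u\to\infty$, to the indicator of the region on which its affine exponent $\ell_{\bm{d}}(\bm{w}_i):=\tilde{\beta}_{\bm{d},0}+\tilde{\bm{\beta}}_{\bm{d}}^T\bm{w}_i$ strictly dominates every competing exponent $\{\ell_{\bm{d}'}\}_{\bm{d}'\in\mathcal{D}^{+}}$. Thus the whole task reduces to choosing the coefficients $\{(\tilde{\beta}_{\bm{d},0},\tilde{\bm{\beta}}_{\bm{d}})\}_{\bm{d}\in\mathcal{D}^{+}}$ so that, for each $\bm{d}$, the affine map $\ell_{\bm{d}}$ is the pointwise maximum of the family exactly on the rectangle $\prod_{l=1}^{L}\mathcal{W}_{l,d_l}$, and is strictly dominated off its closure.

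First I would decouple the $L$ levels. Since the target limit is a product over $l$, I would look for coefficients whose $l$-th slope component depends only on $d_l$, say $(\tilde{\bm{\beta}}_{\bm{d}})_l=a_{l,d_l}$ and $\tilde{\beta}_{\bm{d},0}=\sum_{l=1}^{L}b_{l,d_l}$, so that $\ell_{\bm{d}}(\bm{w}_i)=\sum_{l=1}^{L}(a_{l,d_l}w_{il}+b_{l,d_l})$ splits additively across coordinates. Substituting this additive exponent into Equation (\ref{eq:apx1:step2:xi}) and factoring numerator and denominator then yields $\xi^{(u)}_{\bm{d}}(\bm{w}_i)=\prod_{l=1}^{L}\xi^{(u)}_{l,d_l}(w_{il})$, where each factor is a one-dimensional softmax over the single index $d_l\in\{0,\ldots,D_l+1\}$. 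It therefore suffices to construct, for each level $l$ separately, the scalars $\{(a_{l,d},b_{l,d})\}_{d=0}^{D_l+1}$ and to prove the scalar convergence $\xi^{(u)}_{l,d}(w)\to 1^{*}_{w}(\mathcal{W}_{l,d})$.

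For the one-dimensional construction I would take strictly increasing slopes, e.g.\ $a_{l,d}=d$, and fix the intercepts recursively so that consecutive lines $d$ and $d+1$ cross exactly at the partition point $w^{*}_{l,d}$; this forces $b_{l,d+1}=b_{l,d}-w^{*}_{l,d}$, starting from $b_{l,0}=0$. Because the crossing abscissae $w^{*}_{l,0}<\cdots<w^{*}_{l,D_l}$ increase with $d$, the upper envelope of the $D_l+2$ lines is convex and piecewise linear, with line $d$ realising the maximum precisely on $\mathcal{W}_{l,d}$ (the two semi-infinite end pieces $\mathcal{W}_{l,0}$ and $\mathcal{W}_{l,D_l+1}$ being captured by the extreme slopes, since line $0$ dominates as $w\to-\infty$ and line $D_l+1$ as $w\to+\infty$). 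A standard argument then gives: when $w$ lies in the interior of $\mathcal{W}_{l,d}$ the single dominant exponent forces $\xi^{(u)}_{l,d}(w)\to 1$; when $w$ lies in another interval the factor tends to $0$; and at a breakpoint $w^{*}_{l,d}$, where lines $d$ and $d+1$ tie, the two leading terms split the mass, so the limit lies strictly between $0$ and $1$, matching the $1^{*}_{w}$ convention. Taking the product over $l$ recovers $\prod_{l=1}^{L}1^{*}_{w_{il}}(\mathcal{W}_{l,d_l})$.

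The main obstacle I anticipate is not the generic softmax-to-argmax limit but the careful bookkeeping at the interval boundaries and on the two unbounded end intervals. I must verify that the half-open convention $\mathcal{W}_{l,d}=(w^{*}_{l,d-1},w^{*}_{l,d}]$ is compatible with the tie-splitting limit, so that no boundary point is double counted or lost across the partition, and that the extreme slopes $a_{l,0}$ and $a_{l,D_l+1}$ genuinely dominate in the respective limits $w\to\mp\infty$. Once the scalar limit and its boundary behaviour are pinned down, assembling the multivariate statement through the product factorisation is routine.
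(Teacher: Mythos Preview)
Your proposal is correct and follows essentially the same route as the paper: choose the affine exponents so that $\ell_{\bm d}$ is the strict pointwise maximum precisely on the interior of the product rectangle $\prod_l\mathcal{W}_{l,d_l}$, and then invoke the standard softmax-to-argmax limit. The paper obtains these two ingredients by citing Lemmas~3.1 and~3.2 of \cite{fung2019moe1}, whereas you unpack them explicitly via the additive (hence factorisable) exponent and the one-dimensional piecewise-linear envelope construction; your self-contained version is exactly what lies behind those cited lemmas.
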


\begin{proof}
With a slight (notational) variant of Lemma 3.1 of \cite{fung2019moe1}, it can be easily shown that there exists parameters $\{(\tilde{\beta}_{\bm{d},0},\tilde{\bm{\beta}}_{\bm{d}})\}_{{\bm{d}}\in\mathcal{D}^{+}}$ such that $\tilde{\beta}_{\bm{d},0}+\tilde{\bm{\beta}}_{\bm{d}}^T\bm{w}_i>\max_{\bm{d}'\neq\bm{d};\bm{d}'\in\mathcal{D}^{+}}\tilde{\beta}_{\bm{d}',0}+\tilde{\bm{\beta}}_{\bm{d}'}^T\bm{w}_i$ if and only if $w_{il}\in\mathcal{W}_{l,d_l}^{*}$ for every $\bm{d}\in\mathcal{D}^{+}$, where $\mathcal{W}_{l,d_l}^{*}$ is the interior of $\mathcal{W}_{l,d_l}$. Under a slight notational variant of Lemma 3.2 of \cite{fung2019moe1}, we have $\xi^{(u)}_{\bm{d}}(\bm{w}_i)\xrightarrow{u\rightarrow\infty}1\{\tilde{\beta}_{\bm{d},0}+\tilde{\bm{\beta}}_{\bm{d}}^T\bm{w}_i>\max_{\bm{d}'\neq\bm{d};\bm{d}'\in\mathcal{D}^{+}}\tilde{\beta}_{\bm{d}',0}+\tilde{\bm{\beta}}_{\bm{d}'}^T\bm{w}_i\}+\mathcal{O}_1(1)\times 1\{\tilde{\beta}_{\bm{d},0}+\tilde{\bm{\beta}}_{\bm{d}}^T\bm{w}_i=\max_{\bm{d}'\neq\bm{d};\bm{d}'\in\mathcal{D}^{+}}\tilde{\beta}_{\bm{d}',0}+\tilde{\bm{\beta}}_{\bm{d}'}^T\bm{w}_i\}=\prod_{l=1}^{L}1^{*}_{w_{il}}(\mathcal{W}_{l,d_l})$, where $0\leq\mathcal{O}_1(1)\leq 1$.
\end{proof}

Choosing the parameters indicated by the above lemma, we have the following approximation result between $\tilde{H}^{*}(\bm{y}|\bm{x})$ and $\tilde{F}^{**(u)}(\bm{y}|\bm{x})$:

\allowdisplaybreaks
\begin{align} \label{eq:apx1:step2:bound_final}
\tilde{F}^{**(u)}(\bm{y}|\bm{x})
&\xrightarrow{u\rightarrow\infty}\int\prod_{i=1}^{N}\sum_{\bm{d}\in\mathcal{D}^{+}}\left(\prod_{l=1}^{L}1^{*}_{w_{il}}(\mathcal{W}_{l,d_l})\right)H(\bm{y}_i|\bm{x}_i,\bm{\theta}^{*}_{\bm{d}})d\Phi(\bm{w})\nonumber\\
&=\int_{\bar{\mathcal{W}}}\prod_{i=1}^{N}\sum_{\bm{d}\in\mathcal{D}^{+}}\left(\prod_{l=1}^{L}1^{*}_{w_{il}}(\mathcal{W}_{l,d_l})\right)H(\bm{y}_i|\bm{x}_i,\bm{\theta}^{*}_{\bm{d}})d\Phi(\bm{w})+\mathcal{O}_1(\epsilon_1)\nonumber\\
&=\sum_{\tilde{\bm{d}}\in\tilde{\mathcal{D}}}\int_{\mathcal{W}_{\tilde{\bm{d}}}}\prod_{i=1}^{N}\sum_{\bm{d}\in\mathcal{D}}\left(\prod_{l=1}^{L}1^{*}_{w_{il}}(\mathcal{W}_{l,d_l})\right)H(\bm{y}_i|\bm{x}_i,\bm{\theta}^{*}_{\bm{d}})d\Phi(\bm{w})+\mathcal{O}_1(\epsilon_1)\nonumber\\
&=\sum_{\tilde{\bm{d}}\in\tilde{\mathcal{D}}}\int_{\mathcal{W}_{\tilde{\bm{d}}}}\prod_{i=1}^{N}H(\bm{y}_i|\bm{x}_i,\bm{\theta}^{*}_{\bm{d}^{(c(i))}})d\Phi(\bm{w})+\mathcal{O}_1(\epsilon_1)\nonumber\\
&=\sum_{\tilde{\bm{d}}\in\tilde{\mathcal{D}}}\prod_{i=1}^{N}H(\bm{y}_i|\bm{x}_i,\bm{\theta}^{*}_{\bm{d}^{(c(i))}})G(\tilde{\bm{\Theta}}_{\tilde{\bm{d}}})+\mathcal{O}_1(\epsilon_1)\nonumber\\
&=\tilde{H}^{*}(\bm{y}|\bm{x})+\mathcal{O}_1(\epsilon_1),
\end{align}
where $\bar{\mathcal{W}}_l^{(s)}=\cup_{d_l=1}^{D_l}\mathcal{W}_{l,d_l}$, $\bar{\mathcal{W}}=\prod_{l=1}^{L}\prod_{s=1}^{S_l}\bar{\mathcal{W}}_l^{(s)}$ and $\mathcal{W}_{\tilde{\bm{d}}}=\prod_{l=1}^{L}\prod_{s=1}^{S_l}\bar{\mathcal{W}}_{l,d_l^{(s)}}$. The convergence is resulted from the Dominated Convergence Theorem (DCT), which is obviously uniform on $(\bm{y},\bm{x})$ as $\xi^{(u)}_{\bm{d}}(\bm{w}_i)$ (the only term in $\tilde{F}^{**(u)}(\bm{y}|\bm{x})$ which depends on $u$) does not depend on $(\bm{y},\bm{x})$ and $H(\bm{y}_i|\bm{x}_i,\bm{\theta}^{*}_{\bm{d}})$ is bounded above at $1$. The third last equality holds as the events of the indicator functions only (jointly) hold if $\bm{d}=\bm{d}^{(c(i))}$ when $\bm{w}\in\mathcal{W}_{\tilde{\bm{d}}}$. The second last equality holds as the integrand is constant on $\bm{w}\in\mathcal{W}_{\tilde{\bm{d}}}$ and $\Phi(\mathcal{W}_{\tilde{\bm{d}}})=\prod_{l=1}^{L}\prod_{s=1}^{S_l}\Phi_l(\mathcal{W}_{l,d_l^{(s)}})=\prod_{l=1}^{L}\prod_{s=1}^{S_l}G_l(\bm{\Theta}_{l,d_l^{(s)}})=G(\tilde{\bm{\Theta}}_{\tilde{\bm{d}}})$.

\subsection{Step 3: Approximating Equation (\ref{eq:apx1:3}) by Equation (\ref{eq:apx1:4})} \label{sec:apx1:step3}

Partition the space of $y_{ik}\in\mathbb{R}$ into adjacent half open half closed intervals $\{\mathcal{Y}_{k,m}^M=((m-1/2)h_M^y,(m+1/2)h_M^y]\}_{m=-M,\ldots,M-1,M}$. Define $\mathcal{Y}^M:=\prod_{k=1}^K\cup_{m=-M}^{M}\mathcal{Y}_{k,m}^M:=\prod_{k=1}^K\mathcal{Y}_{k}^M=(-(M+1/2)h_M^y,(M+1/2)h_M^y]^K$, $\mathcal{Y}_{k,-(M+1)}^M=(-\infty,-(M+1/2)h_M^y]$ and $\mathcal{Y}_{k,M+1}^M=((M+1/2)h_M^y,\infty)$. Choose $h_M^y$ such that $h_M^y\downarrow 0$ and $Mh_M^y\uparrow\infty$ as $M\rightarrow\infty$. Also denote $\mathcal{Y}_{\bm{m}}^{M}=\mathcal{Y}_{1,m_1}^M\times\cdots\times\mathcal{Y}_{K,m_K}^M$ as a hypercube with $\bm{m}:=(m_1,\ldots,m_K)\in\mathcal{M}:=\{-(M+1),\ldots,(M+1)\}^K$.


Similarly, partition the space of $x_{ip}\in\mathbb{R}$ into adjacent half open half closed intervals $\{\mathcal{X}_{p,q}^Q=((q-1/2)h_Q^x,(q+1/2)h_Q^x]\}_{q=-Q,\ldots,Q-1,Q}$. Define $\mathcal{X}^Q:=\prod_{p=1}^P\cup_{q=-Q}^{Q}\mathcal{X}_{p,q}^Q:=\prod_{p=1}^P\mathcal{X}_{p}^Q=(-(Q+1/2)h_Q^x,(Q+1/2)h_Q^x]^P$, $\mathcal{X}_{p,-(Q+1)}^Q=(-\infty,-(Q+1/2)h_Q^x]$ and $\mathcal{X}_{p,Q+1}^Q=((Q+1/2)h_Q^x,\infty)$. Choose $h_Q^x$ such that $h_Q^x\downarrow 0$ and $Qh_Q^x\uparrow\infty$ as $Q\rightarrow\infty$. Also denote $\mathcal{X}_{\bm{q}}^{Q}=\mathcal{Y}_{1,q_1}^Q\times\cdots\times\mathcal{X}_{P,q_P}^Q$ as a hypercube with $\bm{q}:=(q_1,\ldots,q_P)\in\mathcal{Q}:=\{-(Q+1),\ldots,(Q+1)\}^P$.

Denote $\tilde{F}^{*(M,Q,t,u)}(\bm{y}|\bm{x})$ as the form of Equation (\ref{eq:apx1:4}), where $\lfloor\bm{y}\rfloor_{\bm{m}}^{M}:=(\lfloor y\rfloor_{m_1}^{M},\ldots,\lfloor y\rfloor_{m_K}^{M})$ (inside the expression of $F^{*(M,Q,t,u)}(\bm{y}_i|\bm{x}_i,\bm{w}_i)$) represents the leftmost vertex of the hypercube $\mathcal{Y}_{\bm{m}}^{M}$. Also, the function $\pi_j^{(t,u)}(\bm{x}_i,\bm{w}_i;\tilde{\bm{\alpha}}_{\bm{q}}^Q,\tilde{\bm{\beta}}_{\bm{d}})$ is a logit linear gating function given by

\begin{align}
&\pi_j^{(t,u)}(\bm{x}_i,\bm{w}_i;\tilde{\bm{\alpha}}_{\bm{q}}^Q,\tilde{\bm{\beta}}_{\bm{d}})\nonumber\\
&=\frac{\exp\{\log H(\mathcal{Y}_{\bm{m}}^M|\bm{x}_{\bm{q}}^{*Q},\bm{\theta}_{\bm{d}}^{*})+t(\tilde{\alpha}_{\bm{q},0}^Q+\tilde{\bm{\alpha}}_{\bm{q}}^{QT}\bm{x}_i)+u(\tilde{\beta}_{\bm{d},0}+\tilde{\bm{\beta}}_{\bm{d}}^T\bm{w}_i)\}}{\sum_{\bm{m}'\in\mathcal{M}}\sum_{\bm{q}'\in\mathcal{Q}}\sum_{\bm{d}'\in\mathcal{D}^{+}}\exp\{\log H(\mathcal{Y}_{\bm{m}'}^M|\bm{x}_{\bm{q}'}^{*Q},\bm{\theta}_{\bm{d}'}^{*})+t(\tilde{\alpha}_{\bm{q}',0}^Q+\tilde{\bm{\alpha}}_{\bm{q}'}^{QT}\bm{x}_i)+u(\tilde{\beta}_{\bm{d}',0}+\tilde{\bm{\beta}}_{\bm{d}'}^T\bm{w}_i)\}}\\
&=\frac{\exp\{t(\tilde{\alpha}_{\bm{q},0}^Q+\tilde{\bm{\alpha}}_{\bm{q}}^{QT}\bm{x}_i)\}}{\sum_{\bm{q}'\in\mathcal{Q}}\exp\{t(\tilde{\alpha}_{\bm{q}',0}^Q+\tilde{\bm{\alpha}}_{\bm{q}'}^{QT}\bm{x}_i)\}}\frac{\exp\{u(\tilde{\beta}_{\bm{d},0}+\tilde{\bm{\beta}}_{\bm{d}}^T\bm{w}_i)\}}{\sum_{\bm{d}'\in\mathcal{D}^{+}}\exp\{u(\tilde{\beta}_{\bm{d}',0}+\tilde{\bm{\beta}}_{\bm{d}'}^T\bm{w}_i)\}}H(\mathcal{Y}_{\bm{m}}^M|\bm{x}_{\bm{q}}^{*Q},\bm{\theta}_{\bm{d}}^{*})\nonumber\\
&:=\gamma_{\bm{q}}^{(t)}(\bm{x}_i)\times\xi_{\bm{d}}^{(u)}(\bm{w}_i)\times H(\mathcal{Y}_{\bm{m}}^M|\bm{x}_{\bm{q}}^{*Q},\bm{\theta}_{\bm{d}}^{*}),
\end{align}
where $\bm{x}_{\bm{q}}^{*Q}=(x_{1,q_1}^{*Q},\ldots,x_{P,q_P}^{*Q})$ is the mid-point of the hypercube $\mathcal{X}_{\bm{q}}^Q$. Here, ``mid-points" for $\mathcal{X}_{p,-(Q+1)}$ and $\mathcal{X}_{p,(Q+1)}$ are respectively defined as $-(Q+1)h_Q^x$ and $(Q+1)h_Q^x$.


Further denote $R^Q(\bm{x}_i)=\{\bm{q}:\forall p ~\text{we have} ~|pj^Q(x_{ip})-x_{p,q_p}^{*Q}|\leq h_Q^x\}$ and $R^{'Q}(\bm{x}_i)=\{\bm{q}:\exists p~\text{such that}~|pj^Q(x_{ip})-x_{p,q_p}^{*Q}|> h_Q^x\}$, where $pj^Q(x_{ip})$ is the projection of $x_{ip}$ on the interval $\mathcal{X}_{p}^Q$ (i.e. $pj^Q(x_{ip})$ is the point in $\mathcal{X}_{p}^Q$ where $x_{ip}$ is closest to). To derive the approximation results, we first introduce the following two technical lemmas.

\begin{lemma} \label{lm:gamma}
Given any fixed $Q$ and $h_Q^x$, there exists parameters $\{\tilde{\alpha}_{\bm{q},0}^Q,\tilde{\bm{\alpha}}_{\bm{q}}^{QT}\}_{\bm{q}\in\mathcal{Q}}$, such that for any $\epsilon_3>0$, we have $\sum_{\bm{q}\in R^{'Q}(\bm{x}_i)}\gamma_{\bm{q}}^{(t)}(\bm{x}_i)\leq\epsilon_3$ for all $\bm{x}_i\in\mathcal{X}$ with sufficiently large $t$.
\end{lemma}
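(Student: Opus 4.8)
The plan is to realise $\gamma_{\bm{q}}^{(t)}(\bm{x}_i)$ as a Gibbs/softmax weight whose energy is the squared distance from $\bm{x}_i$ to the cell centre $\bm{x}_{\bm{q}}^{*Q}$, and then to exploit a uniform gap between the ``near'' and ``far'' centres. Concretely I would take $\tilde{\bm{\alpha}}_{\bm{q}}^Q=2\bm{x}_{\bm{q}}^{*Q}$ and $\tilde{\alpha}_{\bm{q},0}^Q=-\|\bm{x}_{\bm{q}}^{*Q}\|^2$, so that $\tilde{\alpha}_{\bm{q},0}^Q+\tilde{\bm{\alpha}}_{\bm{q}}^{QT}\bm{x}_i=\|\bm{x}_i\|^2-\|\bm{x}_i-\bm{x}_{\bm{q}}^{*Q}\|^2$. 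The term $\|\bm{x}_i\|^2$ is common to all $\bm{q}$ and cancels between numerator and denominator, giving
\[
\gamma_{\bm{q}}^{(t)}(\bm{x}_i)=\frac{\exp\{-t\,\|\bm{x}_i-\bm{x}_{\bm{q}}^{*Q}\|^2\}}{\sum_{\bm{q}'\in\mathcal{Q}}\exp\{-t\,\|\bm{x}_i-\bm{x}_{\bm{q}'}^{*Q}\|^2\}}.
\]
This is the covariate analogue of the device used for Lemma \ref{lm:xi} (a notational variant of Lemmas 3.1--3.2 of \cite{fung2019moe1}); the genuinely new requirement is that the resulting bound hold uniformly over the whole, possibly unbounded, input space $\mathcal{X}$, which is exactly what the projection $pj^Q$ in the definitions of $R^Q$ and $R^{'Q}$ is designed to deliver.

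Second, I would prove a uniform separation estimate: there is a constant $\Delta>0$, depending only on $h_Q^x$, such that for every $\bm{x}_i\in\mathcal{X}$,
\[
\min_{\bm{q}\in R^{'Q}(\bm{x}_i)}\|\bm{x}_i-\bm{x}_{\bm{q}}^{*Q}\|^2-\min_{\bm{q}'\in\mathcal{Q}}\|\bm{x}_i-\bm{x}_{\bm{q}'}^{*Q}\|^2\ \ge\ \Delta.
\]
Since the squared distance and the grid of centres both factor across the $P$ coordinates, the global minimiser is the coordinatewise nearest centre, which lies within $h_Q^x/2$ of $pj^Q(x_{ip})$ in each coordinate and is therefore always a member of $R^Q(\bm{x}_i)$. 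Any $\bm{q}\in R^{'Q}(\bm{x}_i)$ is, by definition, more than $h_Q^x$ from $pj^Q(x_{ip})$ in at least one coordinate $p$; a short case analysis according to whether $x_{ip}$ lies inside $\mathcal{X}_p^Q$ or in its left/right tail (where $pj^Q$ clamps $x_{ip}$ to the endpoint and the convention $\pm(Q+1)h_Q^x$ fixes the tail mid-points) shows that this forces an additional penalty of at least $(h_Q^x)^2$ in that coordinate, while the remaining coordinates contribute nonnegatively. One may therefore take $\Delta=(h_Q^x)^2$.

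Third, I would feed the separation estimate into the softmax. Writing $s_{\min}=\min_{\bm{q}'}\|\bm{x}_i-\bm{x}_{\bm{q}'}^{*Q}\|^2$, the denominator is at least $\exp\{-t s_{\min}\}$ while every term of the numerator sum over $R^{'Q}(\bm{x}_i)$ is at most $\exp\{-t(s_{\min}+\Delta)\}$, so
\[
\sum_{\bm{q}\in R^{'Q}(\bm{x}_i)}\gamma_{\bm{q}}^{(t)}(\bm{x}_i)\ \le\ |\mathcal{Q}|\,\exp\{-t\Delta\},
\]
a bound free of $\bm{x}_i$. Because $Q$ and $h_Q^x$ are fixed, $|\mathcal{Q}|=(2Q+3)^P<\infty$, so given $\epsilon_3>0$ any $t\ge \Delta^{-1}\log(|\mathcal{Q}|/\epsilon_3)$ yields $\sum_{\bm{q}\in R^{'Q}(\bm{x}_i)}\gamma_{\bm{q}}^{(t)}(\bm{x}_i)\le\epsilon_3$ for all $\bm{x}_i\in\mathcal{X}$, as claimed.

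I expect the uniform separation estimate of the second step to be the crux. Pointwise convergence of the softmax to the nearest-centre indicator is immediate, but the statement demands one $t$ that works simultaneously for all $\bm{x}_i$, including those lying arbitrarily far outside the grid $\mathcal{X}^Q$, where the raw squared distances diverge. The role of the projection-based sets $R^Q$ and $R^{'Q}$ is precisely to convert this into a bounded, coordinatewise combinatorial comparison, so the main care is in verifying that the tail cells retain the fixed margin $\Delta$ rather than a vanishing one.
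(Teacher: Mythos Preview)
Your proposal is correct and matches the approach the paper invokes: the paper's own proof of this lemma consists solely of a citation to Theorem~3.2 of \cite{fung2019moe1}, whose underlying construction is precisely the squared-distance softmax you write out. Your explicit parameter choice $(\tilde{\alpha}_{\bm{q},0}^Q,\tilde{\bm{\alpha}}_{\bm{q}}^Q)=(-\|\bm{x}_{\bm{q}}^{*Q}\|^2,\,2\bm{x}_{\bm{q}}^{*Q})$ together with the uniform margin $\Delta=(h_Q^x)^2$---verified coordinatewise and, crucially, in the tail regions via the projection $pj^Q$---supplies the self-contained details that the paper leaves to that reference.
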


\begin{proof}
This follows directly from the proof of Theorem 3.2 of \cite{fung2019moe1}.
\end{proof}

\begin{lemma} \label{lm:tight}
The probability distributions $\{H(\bm{y}_i|\bm{x}_i,\bm{\theta}_i)\}_{\bm{x}_i\in\bar{\mathcal{X}};\bm{\theta}_i\in\bar{\bm{\Theta}}}$ are tight for any compact spaces $\bar{\mathcal{X}}$ and $\bar{\bm{\Theta}}$.
\end{lemma}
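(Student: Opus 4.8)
The plan is to upgrade the tightness of each individual distribution $H(\cdot\,|\,\bm{x}_i,\bm{\theta}_i)$ to \emph{uniform} tightness over the compact parameter set $\bar{\mathcal{X}}\times\bar{\bm{\Theta}}$, the essential ingredient being the joint continuity granted by Assumption \ref{asm:continuity} together with the compactness of $\bar{\mathcal{X}}\times\bar{\bm{\Theta}}$ (a product of compact sets). Since $\mathcal{Y}\subseteq\mathbb{R}^K$, where compact sets are exactly the closed bounded ones, proving tightness reduces to exhibiting, for every $\epsilon>0$, a single radius $R$ such that every member of the family assigns mass at least $1-\epsilon$ to the cube $[-R,R]^K$.

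First I would fix $n\in\mathbb{N}$ and let $\rho_n(\bm{x}_i,\bm{\theta}_i)$ denote the mass that $H(\cdot\,|\,\bm{x}_i,\bm{\theta}_i)$ places on the half-open box $(-n,n]^K$. By the multivariate rectangle (inclusion--exclusion) formula, $\rho_n$ is a finite signed combination of the distribution function $H$ evaluated at the $2^K$ corners of the box, each corner value being jointly continuous in $(\bm{x}_i,\bm{\theta}_i)$ by Assumption \ref{asm:continuity}; hence $\rho_n$ is continuous on the compact space $\bar{\mathcal{X}}\times\bar{\bm{\Theta}}$. Moreover $\rho_n$ is nondecreasing in $n$, since the boxes $(-n,n]^K$ are nested and increase to $\mathbb{R}^K$, and because each $H(\cdot\,|\,\bm{x}_i,\bm{\theta}_i)$ is a genuine probability distribution, continuity of measure from below gives $\rho_n(\bm{x}_i,\bm{\theta}_i)\uparrow 1$ pointwise as $n\to\infty$.

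Next I would invoke Dini's theorem: a monotone sequence of continuous functions on a compact space that converges pointwise to a continuous limit (here the constant $1$) converges uniformly. Consequently, for any $\epsilon>0$ there is an $n_0$ with $\rho_{n_0}(\bm{x}_i,\bm{\theta}_i)\geq 1-\epsilon$ for all $(\bm{x}_i,\bm{\theta}_i)\in\bar{\mathcal{X}}\times\bar{\bm{\Theta}}$, so the compact cube $[-n_0,n_0]^K$ certifies the uniform tightness claimed. An equivalent route is to observe that joint continuity makes the map $(\bm{x}_i,\bm{\theta}_i)\mapsto H(\cdot\,|\,\bm{x}_i,\bm{\theta}_i)$ continuous into the space of probability measures under the weak topology (pointwise convergence of the continuous limiting distribution function forces weak convergence), whence the image of $\bar{\mathcal{X}}\times\bar{\bm{\Theta}}$ is weakly compact and Prohorov's theorem delivers tightness directly.

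The main obstacle is not the tightness of any single distribution, which is automatic on the complete separable metric space $\mathbb{R}^K$, but rather securing \emph{uniformity} over the entire family; both arguments dispose of this by leaning on the compactness of $\bar{\mathcal{X}}\times\bar{\bm{\Theta}}$ paired with the joint continuity of $H$. The one technical point that must be checked with care is the continuity of $\rho_n$ in $(\bm{x}_i,\bm{\theta}_i)$, which I would justify by expressing the box mass through the rectangle formula so that it inherits continuity from the finitely many corner values of $H$.
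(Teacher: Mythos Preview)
Your argument is correct and takes a genuinely different route from the paper. The paper proceeds by a finite-cover argument: it partitions the compact parameter set $\bar{\mathcal{X}}\times\bar{\bm{\Theta}}$ into finitely many small pieces $\{\mathcal{Z}_d\}$, picks one representative point in each, uses individual tightness at those finitely many points to build a single compact $\tilde{\mathcal{Y}}^*$, and then invokes uniform continuity of $(\bm{x},\bm{\theta})\mapsto H(\tilde{\mathcal{Y}}^*\,|\,\bm{x},\bm{\theta})$ to spread the bound to the whole piece. Your approach instead fixes the nested boxes $(-n,n]^K$ in advance, notes that the mass functions $\rho_n$ are continuous in the parameters (via inclusion--exclusion on the $2^K$ corner values), and upgrades the pointwise monotone convergence $\rho_n\uparrow 1$ to uniform convergence by Dini's theorem. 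Your route is cleaner in that it sidesteps the order-of-quantifiers delicacy in the paper's proof (where the cover radius $\delta$ is fixed before $\tilde{\mathcal{Y}}^*$ is constructed, yet ``sufficiently small $\delta$'' is invoked afterwards), and it identifies exactly the abstract principle at work; the paper's argument is more elementary in that it avoids naming Dini or Prohorov, but is otherwise doing the same compactness-plus-continuity work by hand. Your Prohorov alternative is also valid and is the most conceptual of the three.
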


\begin{proof}
Divide the compact space $\bar{\mathcal{Z}}:=\bar{\bm{X}}\times\bar{\bm{\Theta}}$ into $D$ subspaces $\{\mathcal{Z}_d\}_{d=1,\ldots,D}$, where each subspace $\mathcal{Z}_d$ is small enough to be covered by a ball with radius $\delta$. Define $\bm{z}_d^*:=(\bm{x}_d^*,\bm{\theta}_d^*)$ as an arbitrary interior point of $\mathcal{Z}_d$ for $d=1,\ldots,D$. For each $d=1,\ldots,D$, we choose a response space $\tilde{\mathcal{Y}}_d\in\mathcal{Y}$ such that $H(\tilde{\mathcal{Y}}_d|\bm{x}_d^*,\bm{\theta}_d^*)\geq 1-\epsilon /2$. Select a compact space $\tilde{\mathcal{Y}}^*$ which covers all $\{\tilde{\mathcal{Y}}_d\}_{d=1,\ldots,D}$, and we have $H(\tilde{\mathcal{Y}}^*|\bm{x}_d^*,\bm{\theta}_d^*)\geq 1-\epsilon /2$ true for all $d=1,\ldots,D$. Uniform continuity of $H$ on any compact space implies that $|H(\tilde{\mathcal{Y}}^*|\bm{x}_d^*,\bm{\theta}_d^*)-H(\tilde{\mathcal{Y}}^*|\bm{x}_i,\bm{\theta}_i)|\leq\epsilon /2$ for sufficient small $\delta$ if $(\bm{x}_i,\bm{\theta}_i)\in\mathcal{Z}_d$. Overall, we have $H(\tilde{\mathcal{Y}}^*|\bm{x}_i,\bm{\theta}_i)\geq 1-\epsilon$, and hence the result follows.
\end{proof}

As a result, for any compact covariates space of $\bar{\mathcal{X}}$ and any $\epsilon_4>0$, we can use Lemma \ref{lm:tight} to select a rectangular output space $\bar{\mathcal{Y}}$ such that $H(\bar{\mathcal{Y}}|\bm{x}_i,\bm{\theta}_i)\geq 1-\epsilon_4$ for any $\bm{x}_i\in\bar{\mathcal{X}}$ and $\bm{\theta}_i\in\bar{\bm{\Theta}}$, where $\bar{\bm{\Theta}}=\prod_{l=1}^{L}\bar{\bm{\Theta}}_l$ and note that $\bar{\bm{\Theta}}_l$ is defined in Section \ref{sec:apx1:step1}. Then we have for all $\bm{x}_i,\bm{x}_{\bm{q}}^{*Q}\in\bar{\mathcal{X}}$ and $\bm{\theta}^{*}_{\bm{d}}\in\bar{\bm{\Theta}}$:
\begin{equation}
|H(\lceil\bm{y}\rceil_{\bm{m}}^M|\bm{x}_{\bm{q}}^{*Q},\bm{\theta}^{*}_{\bm{d}})-H(pj^M(\lceil\bm{y}\rceil_{\bm{m}}^M)|\bm{x}_q^{*Q},\bm{\theta}^{*}_{\bm{d}})|\leq \epsilon_4,
\end{equation}
and
\begin{equation}
|H(\bm{y}_i|\bm{x}_i,\bm{\theta}^{*}_{\bm{d}})-H(pj^M(\bm{y}_i)|\bm{x}_i,\bm{\theta}^{*}_{\bm{d}})|\leq \epsilon_4,
\end{equation}
where $pj^M(\bm{y}_i)$ is the projection of $\bm{y}_i$ on $\bar{\mathcal{Y}}$, and $\lceil\bm{y}\rceil_{\bm{m}}^M:=(\lceil y \rceil_{m_1}^M,\ldots,\lceil y \rceil_{m_K}^M)$ is the rightmost vertex of hypercube $\mathcal{Y}_{\bm{m}}^{M}$. Further, because of the uniform continuity of $H(\bm{y}_i|\bm{x}_i,\bm{\theta}^{*}_{\bm{d}})$ on $(\bm{x}_i,\bm{\theta}^{*}_{\bm{d}})$ within a compact support, for any $\epsilon_5>0$, we can choose sufficient large $Q$ (to make $h_Q^x$ small enough while $\mathcal{X}^Q$ covers $\bar{\mathcal{X}}$) and $M$ (to make $h_M^y$ small enough while $\mathcal{Y}^M$ covers $\bar{\mathcal{Y}}$) such that for any $\bm{y}_i\in\mathcal{Y}_{\bm{m}}^M$ and $\bm{q}\in R^Q(\bm{x}_i)$, we have:
\begin{equation}
|H(pj^M(\bm{y}_i)|\bm{x}_i,\bm{\theta}^{*}_{\bm{d}})-H(pj^M(\lceil\bm{y}\rceil_{\bm{m}}^M)|\bm{x}_{\bm{q}}^{*Q},\bm{\theta}^{*}_{\bm{d}})|\leq\epsilon_5.
\end{equation}

Summarizing the above three equations, we have:
\begin{equation} \label{eq:apx1:step3:bound}
|H(\bm{y}_i|\bm{x}_i,\bm{\theta}^{*}_{\bm{d}})-H(\lceil\bm{y}\rceil_{\bm{m}}^M|\bm{x}_{\bm{q}}^{*Q},\bm{\theta}^{*}_{\bm{d}})|\leq 2\epsilon_4+\epsilon_5.
\end{equation}

Note that $F^{*(M,Q,t,u)}(\bm{y}_i|\bm{x}_i,\bm{w}_i)$ can be re-written as
\begin{align} \label{eq:apx1:step3:rewrite}
F^{*(M,Q,t,u)}(\bm{y}_i|\bm{x}_i,\bm{w}_i)
&=\sum_{\bm{m}\in\mathcal{M}}\sum_{\bm{q}\in\mathcal{Q}}\sum_{\bm{d}\in\mathcal{D}^{+}}\gamma_{\bm{q}}^{(t)}(\bm{x}_i)\xi_{\bm{d}}^{(u)}(\bm{w}_i) H(\lceil\bm{y}\rceil_{\bm{m}}^M|\bm{x}_{\bm{q}}^{*Q},\bm{\theta}_{\bm{d}}^{*})1\{\bm{y}_i\in\mathcal{Y}_{\bm{m}}^M\}\nonumber\\
&=\sum_{\bm{m}\in\mathcal{M}}\sum_{\bm{q}\in R^Q(\bm{x}_i)}\sum_{\bm{d}\in\mathcal{D}^{+}}\gamma_{\bm{q}}^{(t)}(\bm{x}_i)\xi_{\bm{d}}^{(u)}(\bm{w}_i) H(\lceil\bm{y}\rceil_{\bm{m}}^M|\bm{x}_{\bm{q}}^{*Q},\bm{\theta}_{\bm{d}}^{*})1\{\bm{y}_i\in\mathcal{Y}_{\bm{m}}^M\}+\mathcal{O}_1(\epsilon_3),
\end{align}
where $0\leq\mathcal{O}_1(\epsilon_3)\leq \epsilon_3$. Note that the last equality is resulted from Lemma \ref{lm:gamma}. Then, the approximation result is given by:

\begin{align} \label{eq:apx1:step3:bound_final}
&|\tilde{F}^{*(M,Q,t,u)}(\bm{y}|\bm{x})-\tilde{F}^{**(u)}(\bm{y}|\bm{x})|\nonumber\\
&\leq \int\left|\prod_{i=1}^{N}F^{*(M,Q,t,u)}(\bm{y}_i|\bm{x}_i,\bm{w}_i)-\prod_{i=1}^{N}F^{**(u)}(\bm{y}_i|\bm{x}_i,\bm{w}_i)\right|d\Phi (\bm{w})\nonumber\\
&\leq N\int \left|F^{*(M,Q,t,u)}(\bm{y}_i|\bm{x}_i,\bm{w}_i)-F^{**(u)}(\bm{y}_i|\bm{x}_i,\bm{w}_i)\right| d\Phi (\bm{w})\nonumber\\
&\leq N\int \sum_{\bm{m}\in\mathcal{M}}\sum_{\bm{q}\in R^Q(\bm{x}_i)}\sum_{\bm{d}\in\mathcal{D}^{+}}\gamma_{\bm{q}}^{(t)}(\bm{x}_i)\xi_{\bm{d}}^{(u)}(\bm{w}_i)\left|H(\bm{y}_i|\bm{x}_i,\bm{\theta}^{*}_{\bm{d}})-H(\lceil\bm{y}\rceil_{\bm{m}}^M|\bm{x}_{\bm{q}}^{*Q},\bm{\theta}^{*}_{\bm{d}})\right| 1\{\bm{y}_i\in\mathcal{Y}_{\bm{m}}^M\} d\Phi (\bm{w})+2\epsilon_3\nonumber\\
&\leq N(2\epsilon_4+\epsilon_5+2\epsilon_3),
\end{align}
where the second inequality is resulted from Lemma \ref{lm:product}, the third and last inequalities are respectively resulted from Equations (\ref{eq:apx1:step3:rewrite}) and (\ref{eq:apx1:step3:bound}).

\subsection{Step 4: Approximating Equation (\ref{eq:apx1:4}) by Equation (\ref{eq:apx1:5})} \label{sec:apx1:step4}

In the final step, we denote $\tilde{F}^{(M,Q,t,u,v)}(\bm{y}|\bm{x})$ as the form of Equation (\ref{eq:apx1:5}), where $F_0(\bm{y}_i;\bm{\psi}_{\bm{m}}^{M(v)})$ in $F^{(M,Q,t,u,v)}(\bm{y}_i|\bm{x}_i,\bm{w}_i)$ is chosen in the way that $F_0(\bm{y}_i;\bm{\psi}_{\bm{m}}^{M(v)})\xrightarrow{\mathcal{D}}1\{\bm{y}_i\geq\lfloor\bm{y}\rfloor_{\bm{m}}^{M}\}$ as $v\rightarrow\infty$. Due to the distributional convergence as well as $\mathcal{M}$ is a finite set, for any $\epsilon_6>0$ we can find a sufficient large $v$ such that for every $\bm{m}\in\mathcal{M}$, we have:
\begin{equation} \label{eq:apx1:step4:approx}
|F_0(\bm{y}_i;\bm{\psi}_{\bm{m}}^{M(v)})-1\{\bm{y}_i\geq\lfloor\bm{y}\rfloor_{\bm{m}}^{M}\}|\leq \epsilon_6 + \sum_{k=1}^{K}1\{y_{ik}\in\mathcal{L}_k^{\delta^{*}}(\lfloor y\rfloor_{m_k}^{M})\},
\end{equation}
where $\delta^{*}$ is chosen to be $0<\delta^{*}<h_M^y/2$, and $\mathcal{L}_k^{\delta^{*}}(\lfloor y\rfloor_{m_k}^{M})$ represents non-overlapping intervals for $m_k=-(M+1),\ldots,(M+1)$ with
\begin{equation}
  \mathcal{L}_k^{\delta^{*}}(\lfloor y\rfloor_{m_k}^{M})=\left\{
  \begin{array}{@{}ll@{}}
    [\lfloor y\rfloor_{m_k}^{M}-\delta^{*},\lfloor y\rfloor_{m_k}^{M}+\delta^{*}], & \text{if}\ m_k>-(M+1) \\
    (-\infty,\lfloor y\rfloor_{m_k}^{M}-2\delta^{*}], & m_k=-(M+1)
  \end{array}\right.
\end{equation} 
Note that the rightmost term in Equation (\ref{eq:apx1:step4:approx}) is to control for the fact that the weak convergence of $F_0$ to the indicator is not uniform when $y_{ik}$ is close to $\lfloor y\rfloor_{m_k}^{M}$. Consider the bound
\begin{align}\label{apx1:step4:approx2}
&|F^{(M,Q,t,u,v)}(\bm{y}_i|\bm{x}_i,\bm{w}_i)-F^{*(M,Q,t,u)}(\bm{y}_i|\bm{x}_i,\bm{w}_i)|\nonumber\\
&\leq \sum_{\bm{m}\in\mathcal{M}}\sum_{\bm{q}\in\mathcal{Q}}\sum_{\bm{d}\in\mathcal{D}^{+}}\gamma_{\bm{q}}^{(t)}(\bm{x}_i)\xi_{\bm{d}}^{(u)}(\bm{w}_i) H(\mathcal{Y}_{\bm{m}}^M|\bm{x}_{\bm{q}}^{*Q},\bm{\theta}_{\bm{d}}^{*})|F_0(\bm{y}_i;\bm{\psi}_{\bm{m}}^{M(v)})-1\{\bm{y}_i\geq\lfloor\bm{y}\rfloor_{\bm{m}}^{M}\}|\nonumber\\
&\leq \left\{\max_{\bm{q}\in\mathcal{Q};\bm{d}\in\mathcal{D}^{+}}\sum_{\bm{m}\in\mathcal{M}}\sum_{k=1}^{K}H(\mathcal{Y}_{\bm{m}}^M|\bm{x}_{\bm{q}}^{*Q},\bm{\theta}_{\bm{d}}^{*})1\{y_{ik}\in\mathcal{L}_k^{\delta^{*}}(\lfloor y\rfloor_{m_k}^{M})\}\right\}+\epsilon_6\nonumber\\
&=\sum_{k=1}^{K}\left\{\max_{\bm{q}\in\mathcal{Q};\bm{d}\in\mathcal{D}^{+}}\sum_{m_k=-(M+1),\ldots,(M+1)}H_k(\mathcal{Y}_{k,m_k}|\bm{x}_{\bm{q}}^{*Q},\bm{\theta}_{\bm{d}}^{*})1\{y_{ik}\in\mathcal{L}_k^{\delta^{*}}(\lfloor y\rfloor_{m_k}^{M})\}\right\}+\epsilon_6.
\end{align}
Since $\mathcal{L}_k^{\delta^{*}}(\lfloor y\rfloor_{m_k}^{M})$ is non-overlapping for $m_k=-(M+1),\ldots,(M+1)$, only one term in the summation of Equation (\ref{apx1:step4:approx2}) is non-zero. Since $H_k$ is a continuous distribution, for any $\epsilon_7>0$, we have $H_k(\mathcal{Y}_{k,m_k}|\bm{x}_{\bm{q}}^{*Q},\bm{\theta}_{\bm{d}}^{*})\leq\epsilon_7$ for any $\bm{q}\in\mathcal{Q}$, $\bm{d}\in\mathcal{D}^{+}$ and $m_k=-(M+1),\ldots,(M+1)$ given that $M$ is sufficiently large. Finally, using the same proof idea as Equation (\ref{eq:apx1:step3:bound_final}), we have

\begin{equation} \label{eq:apx1:step4:bound_final}
|\tilde{F}^{(M,Q,t,u,v)}(\bm{y}|\bm{x})-\tilde{F}^{*(M,Q,t,u)}(\bm{y}|\bm{x})|\leq N(K\epsilon_7+\epsilon_6).
\end{equation}

In summary, based on Equations (\ref{eq:apx1:step1:bound_final}), (\ref{eq:apx1:step2:bound_final}) (\ref{eq:apx1:step3:bound_final}) and (\ref{eq:apx1:step4:bound_final}), Theorem \ref{thm:denseness_cross} holds because for sufficiently large $M$, $Q$, $t$ and $v$, the following inequality holds uniformly for each $\bm{x}_i$ falling into a compact covariates space:
\begin{equation}
|\tilde{F}^{(M,Q,t,u\rightarrow\infty,v)}(\bm{y}|\bm{x})-\tilde{H}(\bm{y}|\bm{x})|\leq
(N\epsilon_2+\epsilon_1)+\mathcal{O}_1(\epsilon_1)+N(2\epsilon_4+\epsilon_5+2\epsilon_3)+N(K\epsilon_7+\epsilon_6),
\end{equation}
where $\epsilon_1$ to $\epsilon_7$ can be chosen to be arbitrarily small, and any parameters chosen in Steps 1 to 4 are independent of $N$, $\bm{S}=(S_1,\ldots,S_L)$ and $\bm{c}(\cdot)$.

\section{Proof of Theorem \ref{thm:denseness_nested}} \label{sec:apx2}
Other than the notational differences, the proof ideas of Theorems \ref{thm:denseness_cross} and \ref{thm:denseness_nested} are substantially similar. Precisely, the 4-step framework used to prove Theorem \ref{thm:denseness_cross} also applies to prove Theorem \ref{thm:denseness_nested}. As a result, we only present a sketch proof of Theorem \ref{thm:denseness_nested}, with an emphasis on the key differences of proof techniques between the two theorems. Unless specified otherwise, the notations adopted in this proof section is the same as those defined in Appendix \ref{sec:apx1}.


Analogous to Equations (\ref{eq:apx1:1}) and (\ref{eq:apx1:2}), in Step 1 we examine an approximation bound between the following two equations:
\begin{equation} \label{eq:apx2:1}
\tilde{H}(\bm{y}|\bm{x})=\int_{\tilde{\bm{\Theta}}}\left[\prod_{{\bm{i}}\in\mathcal{I}}H(\bm{y}_{\bm{i}}|\bm{x}_{\bm{i}},\bm{\theta}_{\bm{i}})\right]dG(\tilde{\bm{\theta}}),
\end{equation}
and
\begin{equation} \label{eq:apx2:2}
\tilde{H}^{*}(\bm{y}|\bm{x})=\sum_{\tilde{\bm{d}}\in\tilde{\mathcal{D}}}\prod_{{\bm{i}}\in\mathcal{I}}H(\bm{y}_{\bm{i}}|\bm{x}_{\bm{i}},\bm{\theta}^{*}_{\bm{d}^{(\bm{i}_L)}})G(\tilde{\bm{\Theta}}_{\tilde{\bm{d}}}).
\end{equation}

Similar to Appendix \ref{sec:apx1:step1}, we choose compact spaces $\{\bar{\bm{\Theta}}_l\in\bm{\Theta}\}_{l=1,\ldots,L}$ such that $\mathbb{P}(\cap_{l=1}^{L}\cap_{\bm{i}_l\in\mathcal{I}_l}\{\bm{\theta}_{\bm{i}_l}\in\bar{\bm{\Theta}}_l\})\geq 1-NL\epsilon_1$, where $\mathcal{I}_l=\{\bm{i}_l:i_1=1,\ldots,N_0;\ldots;i_l=1,\ldots,N_{\bm{i}_{l-1}}\}$. Also partition very granular subspaces $\{\bm{\Theta}_{l,d_l}\}_{d_l=1,\ldots,D_l}$ of $\bar{\bm{\Theta}}_l$ and define the interior points $\{\bm{\theta}_{l,d_l}^*\}_{d_l=1,\ldots,D_l}$ in the exact same way as Appendix \ref{sec:apx1:step1}. Define here $\tilde{\mathcal{D}}=\prod_{l=1}^{L}\prod_{\bm{i}_l\in\mathcal{I}_l}\mathcal{D}_l^{(\bm{i}_l)}$ with $\mathcal{D}_l^{(\bm{i}_l)}=\{1,\ldots,D_l\}$, $\tilde{\bm{d}}=\{d_l^{(\bm{i}_l)}\}_{l=1,\ldots,L;\bm{i}_l\in\mathcal{I}_l}$ with $d_l^{(\bm{i}_l)}\in\mathcal{D}_l^{(\bm{i}_l)}$, $\bm{d}^{(\bm{i}_L)}=\{d_l^{(\bm{i}_l)}\}_{l=1,\ldots,L}$, and $\tilde{\bm{\Theta}}_{\tilde{\bm{d}}}=\prod_{l=1}^{L}\prod_{\bm{i}_l\in\mathcal{I}_l}\bm{\Theta}_{l,d_l^{(\bm{i}_l)}}$. Then, using the exact logic as Equations (\ref{eq:apx1:step1:rewrite1}) to (\ref{eq:apx1:step1:bound_final}), an arbitrarily small approximation error bound between Equations (\ref{eq:apx2:1}) and (\ref{eq:apx2:2}) can be obtained.


In Step 2, we define the following function analogous to Equation (\ref{eq:apx1:3}) and derive its error bound for approximating Equation (\ref{eq:apx2:2}):

\begin{equation} \label{eq:apx2:3}
\tilde{F}^{**(u)}(\bm{y}|\bm{x})=\int\prod_{{\bm{i}}\in\mathcal{I}}F^{**(u)}(\bm{y}_{\bm{i}}|\bm{x}_{\bm{i}},\bm{w}_{\bm{i}})d\Phi (\bm{w})
\end{equation}
with $F^{**(u)}(\bm{y}_{\bm{i}}|\bm{x}_{\bm{i}},\bm{w}_{\bm{i}})=\sum_{\bm{d}\in\mathcal{D}^{+}}\xi_{\bm{d}}^{(u)}(\bm{w}_{\bm{i}})H(\bm{y}_{\bm{i}}|\bm{x}_{\bm{i}},\bm{\theta}^{*}_{\bm{d}})$. Here, we choose $\xi_{\bm{d}}^{(u)}(\bm{w}_{\bm{i}})$ in a different way as Equation (\ref{eq:apx1:step2:xi}), which is crucial to cater for the dependencies of random effects across levels, as follows:

\begin{equation} \label{eq:apx2:step2:xi}
\xi_{\bm{d}}^{(u)}(\bm{w}_{\bm{i}})=\exp\{\sum_{l=1}^{L}u^{1/l}(\tilde{\beta}_{\bm{d}_l,0}+\tilde{\beta}_{\bm{d}_l,1}w_{il})\}/\sum_{\bm{d}'\in\mathcal{D}^{+}}\exp\{\sum_{l=1}^{L}u^{1/l}(\tilde{\beta}_{\bm{d}'_l,0}+\tilde{\beta}_{\bm{d}'_l,1}w_{il})\},
\end{equation}
where $\mathcal{D}=\prod_{l=1}^{L}\{1,\ldots,D_l\}$ and $\mathcal{D}^{+}=\prod_{l=1}^{L}\{0,1,\ldots,D_l+1\}$ are exactly the same as those defined in Appendix \ref{sec:apx1:step2}, and $\bm{d}_l=(d_1,\ldots,d_l)$ and $\bm{d}'_l=(d'_1,\ldots,d'_l)$ with $\bm{d}=\bm{d}_L$ (and $\bm{d}'=\bm{d}'_L$).


In contrast to Appendix \ref{sec:apx1:step2}, we construct intervals $\mathcal{W}_{l,\bm{d}_l}$ such that $\cup_{d_l=0}^{D_l+1}\mathcal{W}_{l,\bm{d}_l}=\mathbb{R}$ for any $\bm{d}_{l-1}\in\prod_{l'=1}^{l-1}\{1,\ldots,D_{l'}\}$ and $\Phi_l(\mathcal{W}_{l,\bm{d}_l})=G_l(\bm{\Theta}_{l,d_l}|\bm{\Theta}_{1,d_1},\ldots,\bm{\Theta}_{l-1,d_{l-1}})$, which represents the probability of level-$l$ random effect belongs to $\bm{\Theta}_{l,d_l}$ conditioned on the corresponding upper level random effects fall into $(\bm{\Theta}_{1,d_1},\ldots,\bm{\Theta}_{l-1,d_{l-1}})$. The following lemma is analogous to Lemma \ref{lm:xi}:

\begin{lemma} \label{lm:xi2}
There exists parameters $\{(\tilde{\beta}_{\bm{d}_l,0},\tilde{\beta}_{\bm{d}_l,1})\}_{{\bm{d}}\in\mathcal{D}^{+};l=1,\ldots,L}$ of $\xi^{(u)}_{\bm{d}}(\bm{w}_{\bm{i}})$ such that $\xi^{(u)}_{\bm{d}}(\bm{w}_{\bm{i}})\xrightarrow{u\rightarrow\infty}\prod_{l=1}^{L}1^{*}_{w_{il}}(\mathcal{W}_{l,\bm{d}_l})$ for every $\bm{d}\in\mathcal{D}^{+}$.
\end{lemma}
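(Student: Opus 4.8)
The plan is to adapt the single-scale degeneracy argument of Lemma \ref{lm:xi} to the nested setting, where the essential new feature is that the level-$l$ partition $\{\mathcal{W}_{l,\bm{d}_l}\}_{d_l}$ depends on the entire history $\bm{d}_{l-1}=(d_1,\ldots,d_{l-1})$ through the conditional matching $\Phi_l(\mathcal{W}_{l,\bm{d}_l})=G_l(\bm{\Theta}_{l,d_l}\mid\bm{\Theta}_{1,d_1},\ldots,\bm{\Theta}_{l-1,d_{l-1}})$. First I would, for each level $l$ and each fixed history $\bm{d}_{l-1}\in\prod_{l'<l}\{1,\ldots,D_{l'}\}$, invoke the one-dimensional construction behind Lemma \ref{lm:xi} (the notational variant of Lemma 3.1 of \cite{fung2019moe1}) applied to the partition $\{\mathcal{W}_{l,(\bm{d}_{l-1},d_l)}\}_{d_l=0}^{D_l+1}$ of $\mathbb{R}$. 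This yields coefficients $\{(\tilde{\beta}_{(\bm{d}_{l-1},d_l),0},\tilde{\beta}_{(\bm{d}_{l-1},d_l),1})\}_{d_l}$ for which the affine function $w\mapsto\tilde{\beta}_{(\bm{d}_{l-1},d_l),0}+\tilde{\beta}_{(\bm{d}_{l-1},d_l),1}w$ is the strict upper envelope exactly on the interior $\mathcal{W}^{*}_{l,(\bm{d}_{l-1},d_l)}$.

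Next I would fix a point $\bm{w}_{\bm{i}}$ with each $w_{il}$ in the interior of its correctly nested interval and define the sequential assignment $\bm{d}^{*}=(d_1^{*},\ldots,d_L^{*})$ by $w_{i1}\in\mathcal{W}^{*}_{1,d_1^{*}}$, then $w_{i2}\in\mathcal{W}^{*}_{2,(d_1^{*},d_2^{*})}$, and so on. Writing the exponent score $S_{\bm{d}}(\bm{w}_{\bm{i}},u)=\sum_{l=1}^{L}u^{1/l}(\tilde{\beta}_{\bm{d}_l,0}+\tilde{\beta}_{\bm{d}_l,1}w_{il})$, the goal reduces to showing that $S_{\bm{d}^{*}}-S_{\bm{d}}\to+\infty$ for every $\bm{d}\neq\bm{d}^{*}$ as $u\to\infty$; the softmax degeneracy (the variant of Lemma 3.2 of \cite{fung2019moe1}) then delivers $\xi^{(u)}_{\bm{d}}(\bm{w}_{\bm{i}})\to 1\{\bm{d}=\bm{d}^{*}\}=\prod_{l=1}^{L}1^{*}_{w_{il}}(\mathcal{W}_{l,\bm{d}_l})$.

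The heart of the argument, and the reason the scale separation $u^{1/l}$ is indispensable, is the following. For $\bm{d}\neq\bm{d}^{*}$ let $l_0$ be the first level at which they differ, so that $\bm{d}_{l-1}=\bm{d}^{*}_{l-1}$ for all $l\leq l_0$. The terms with $l<l_0$ cancel in $S_{\bm{d}^{*}}-S_{\bm{d}}$, leaving a leading term $u^{1/l_0}\Delta_{l_0}$, where $\Delta_{l_0}=(\tilde{\beta}_{\bm{d}^{*}_{l_0},0}+\tilde{\beta}_{\bm{d}^{*}_{l_0},1}w_{il_0})-(\tilde{\beta}_{\bm{d}_{l_0},0}+\tilde{\beta}_{\bm{d}_{l_0},1}w_{il_0})$. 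Crucially, because the histories agree up to level $l_0-1$, both $\bm{d}^{*}_{l_0}$ and $\bm{d}_{l_0}$ index affine functions of the \emph{same} level-$l_0$ partition conditioned on $\bm{d}^{*}_{l_0-1}$, so the envelope property from the first step forces $\Delta_{l_0}$ to be a strictly positive constant (since $w_{il_0}$ is an interior point). As the remaining terms are of order $u^{1/(l_0+1)}$ with coefficients bounded for the fixed $\bm{w}_{\bm{i}}$, the leading term dominates and $S_{\bm{d}^{*}}-S_{\bm{d}}\to+\infty$.

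I expect the main obstacle to be verifying rigorously that these multi-scale exponents enforce the intended lexicographic priority across levels — that is, that every pairwise comparison is decided at the first level of disagreement, where the conditional partitions coincide, rather than through a cleaner-looking single-scale scheme which would illegitimately mix level-$l$ envelope comparisons drawn from incompatible conditional partitions. Boundary points, where some $w_{il}$ lands on an interval endpoint, form a $\Phi$-null set by Assumption \ref{asm:cont_random_effect_disn} and are absorbed by the $1^{*}$ notation exactly as in Lemma \ref{lm:xi}. Once this pointwise limit is in hand, the remainder of Step~2 mirrors the crossed-case Dominated Convergence argument leading to Equation (\ref{eq:apx1:step2:bound_final}) in Appendix \ref{sec:apx1}, with the conditional identity $\Phi_l(\mathcal{W}_{l,\bm{d}_l})=G_l(\bm{\Theta}_{l,d_l}\mid\bm{\Theta}_{1,d_1},\ldots,\bm{\Theta}_{l-1,d_{l-1}})$ replacing the unconditional matching of measures, thereby reproducing the nested factorization of Equation (\ref{eq:gen_latent_nested}).
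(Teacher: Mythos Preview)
Your proposal is correct and follows essentially the same approach as the paper: construct per-level affine upper envelopes (indexed by the history $\bm{d}_{l-1}$) via the one-dimensional device of Lemma~3.1 in \cite{fung2019moe1}, then use the graded scales $u^{1/l}$ to enforce lexicographic dominance so that the softmax degenerates as in Lemma~\ref{lm:xi}. Your write-up is in fact more explicit than the paper's about the key step---identifying the first level $l_0$ of disagreement, noting cancellation for $l<l_0$, the strict positivity of $\Delta_{l_0}$ because the compared affines come from the \emph{same} conditional partition, and the subordination of the $l>l_0$ terms---but the underlying argument is identical.
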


\begin{proof}
Similar to the proof of Lemma \ref{lm:xi}, we choose suitable parameters such that $\tilde{\beta}_{\bm{d}_l,0}+\tilde{\beta}_{\bm{d}_l,1}w_{il}>\max_{d_l'\neq d_l}\tilde{\beta}_{\bm{d}_l^{*},0}+\tilde{\beta}_{\bm{d}_l^{*},1}w_{il}$ if and only if $w_{il}\in\mathcal{W}_{l,\bm{d}_l}^{*}$ for every $\bm{d}\in\mathcal{D}^{+}$ and $l=1,\ldots,L$, where $\mathcal{W}_{l,\bm{d}_l}^{*}$ is the interior of $\mathcal{W}_{l,\bm{d}_l}$ and $\bm{d}_l^{*}=(d_1,\ldots,d_{l-1},d_l')$. Observe the expression $\sum_{l=1}^{L}u^{1/l}(\tilde{\beta}_{\bm{d}_l,0}+\tilde{\beta}_{\bm{d}_l,1}w_{il})$ in Equation (\ref{eq:apx2:step2:xi}) where the term corresponding to a higher level factor (small $l$) dominates when $u$ is large. Therefore, for sufficient large $u$, we have $\sum_{l=1}^{L}u^{1/l}(\tilde{\beta}_{\bm{d}_l,0}+\tilde{\beta}_{\bm{d}_l,1}w_{il})>\max_{\bm{d}\neq\bm{d}'}\sum_{l=1}^{L}u^{1/l}(\tilde{\beta}_{\bm{d}'_l,0}+\tilde{\beta}_{\bm{d}'_l,1}w_{il})$ for $\bm{w}_i$ satisfying $w_{il}\in\mathcal{W}_{l,\bm{d}_l}^{*}$ for every $l=1,\ldots,L$. Referring to the same logic as the proof of Lemma \ref{lm:xi} and the result follows.
\end{proof}

Now, the approximation bound between Equations (\ref{eq:apx2:2}) and (\ref{eq:apx2:3}) can be obtained using the same logic (with notational variations) as that outlined by Equation (\ref{eq:apx1:step2:bound_final}), where we further note that $\Phi (\mathcal{W}_{\tilde{\bm{d}}})=\prod_{l=1}^L\prod_{\bm{i}_l\in\mathcal{I}_l}\Phi_l(\mathcal{W}_{l,\bm{d}_l^{(\bm{i}_l)}})=\prod_{l=1}^L\prod_{\bm{i}_l\in\mathcal{I}_l}G_l(\bm{\Theta}_{l,d_l^{(\bm{i}_{l})}}|\bm{\Theta}_{1,d_1^{(\bm{i}_{1})}},\ldots,\bm{\Theta}_{l-1,d_{l-1}^{(\bm{i}_{l-1})}})=G(\tilde{\bm{\Theta}}_{\tilde{\bm{d}}})$ with $\mathcal{W}_{\tilde{\bm{d}}}=\prod_{l=1}^L\prod_{\bm{i}_l\in\mathcal{I}_l}\mathcal{W}_{l,\bm{d}_l^{(\bm{i}_l)}}$.


Step 3 and 4 involve evaluations of the following expressions in analogous to Equations (\ref{eq:apx1:4}) and (\ref{eq:apx1:5}):

\begin{equation} \label{eq:apx2:4}
\tilde{F}^{*(M,Q,t,u)}(\bm{y}|\bm{x})=\int\prod_{\bm{i}\in\mathcal{I}}F^{*(M,Q,t,u)}(\bm{y}_{\bm{i}}|\bm{x}_{\bm{i}},\bm{w}_{\bm{i}})d\Phi (\bm{w})
\end{equation}
with $F^{*(M,Q,t,u)}(\bm{y}_{\bm{i}}|\bm{x}_{\bm{i}},\bm{w}_{\bm{i}})=\sum_{\bm{m}\in\mathcal{M}}\sum_{\bm{q}\in\mathcal{Q}}\sum_{\bm{d}\in\mathcal{D}^{+}}\pi_j^{(t,u)}(\bm{x}_{\bm{i}},\bm{w}_{\bm{i}};\tilde{\bm{\alpha}}_{\bm{q}}^Q,\tilde{\bm{\beta}}_{\bm{d}})1\{\bm{y}_{\bm{i}}\geq\lfloor\bm{y}\rfloor_{\bm{m}}^{M}\}$,
\begin{equation} \label{eq:apx2:5}
\tilde{F}^{(M,Q,t,u,v)}(\bm{y}|\bm{x})=\int\prod_{\bm{i}\in\mathcal{I}}F^{(M,Q,t,u,v)}(\bm{y}_{\bm{i}}|\bm{x}_{\bm{i}},\bm{w}_{\bm{i}})d\Phi (\bm{w})
\end{equation}
with $F^{(M,Q,t,u,v)}(\bm{y}_{\bm{i}}|\bm{x}_{\bm{i}},\bm{w}_{\bm{i}})=\sum_{\bm{m}\in\mathcal{M}}\sum_{\bm{q}\in\mathcal{Q}}\sum_{\bm{d}\in\mathcal{D}^{+}}\pi_j^{(t,u)}(\bm{x}_{\bm{i}},\bm{w}_{\bm{i}};\tilde{\bm{\alpha}}_{\bm{q}}^Q,\tilde{\bm{\beta}}_{\bm{d}})F_0(\bm{y}_{\bm{i}};\bm{\psi}_{\bm{m}}^{M(v)})$. The derivation techniques here are exactly the same as those presented in Appendices \ref{sec:apx1:step3} and \ref{sec:apx1:step4}, so this part of the proof is omitted.

\end{appendices}

\end{document}